\documentclass[11pt]{scrarticle}
\usepackage[english]{babel}
\usepackage[letterpaper,top=2.54cm,bottom=2.54cm,left=2.54cm,right=2.54cm,marginparwidth=1.75cm]{geometry}

\usepackage{amsmath,amssymb,amsfonts}
\usepackage{amsthm}
\usepackage{graphicx}
\usepackage{subcaption}
\usepackage[colorlinks=true, allcolors=blue]{hyperref}
\usepackage{lscape}
\usepackage{hhline}
\usepackage{natbib}

\usepackage{tikz}
\usetikzlibrary{shapes.geometric, arrows}
\usetikzlibrary{positioning}
\usepackage{multirow}
\usepackage{algorithm}
\usepackage[indLines=false]{algpseudocodex}
\usepackage{float}
\usepackage[onehalfspacing]{setspace}

\usepackage{authblk}

\newtheorem{theorem}{Theorem}

\newtheorem{definition}{Definition}
\newtheorem{lemma}{Lemma}

\newtheorem{example}{Example}

\newcommand{\Z}{\mathbb{Z}}

\newcommand{\cart}{\texttt{CART}}
\newcommand{\cartsp}{\texttt{CART} }
\newcommand{\cartpure}{\texttt{CARTpure}}
\newcommand{\cartpuresp}{\texttt{CARTpure} }
\newcommand{\miqp}{\texttt{MIQP}}
\newcommand{\miqpsp}{\texttt{MIQP} }
\newcommand{\samx}{\texttt{CE-DT}}
\newcommand{\samxsp}{\texttt{CE-DT} }
\newcommand{\exact}{\texttt{EXACT}}
\newcommand{\exactsp}{\texttt{EXACT} }
\newcommand{\LSone}{\texttt{LS1}}
\newcommand{\LSonesp}{\texttt{LS1} }
\newcommand{\LStwo}{\texttt{LS2}}
\newcommand{\LStwosp}{\texttt{LS2} }
\newcommand{\LSthree}{\texttt{LS3}}
\newcommand{\LSthreesp}{\texttt{LS3} }
\newcommand{\REpre}{\texttt{REpre}}
\newcommand{\REpresp}{\texttt{REpre} }
\newcommand{\REpost}{\texttt{REpost}}
\newcommand{\REpostsp}{\texttt{REpost} }

\definecolor{CustomLightGray}{HTML}{ededed}
\definecolor{gred}{HTML}{BB5566}
\definecolor{ggreen}{HTML}{117733}

\DeclareMathOperator*{\argmax}{arg\,max}
\DeclareMathOperator*{\argmin}{arg\,min}

\DeclareMathOperator*{\cl}{cl}

\newcommand{\RepeatTimes}[1]{\While{$#1$ \textbf{times:}}}
\newcommand{\EndRepeat}{\EndWhile}
\algtext{While}{\textbf{repeat }}

\title{Decision-Tree Surrogates for Weak Counterfactual Explanations in Integer Optimization}

\date{}

\author[1]{Marc Goerigk}

\author[2]{Jannis Kurtz}

\author[1]{Sebastian Merten\thanks{Corresponding author. Email: sebastian.merten@uni-passau.de}}

\affil[1]{Business Decisions and Data Science, University of Passau, Passau, Germany}

\affil[2]{Amsterdam Business School, University of Amsterdam, Amsterdam, Netherlands}

\begin{document}
\maketitle

\begin{abstract}
The concept of counterfactual explanations (CEs) has recently gained increasing attention for deriving explanations of mathematical optimization problems. For a given problem, a counterfactual is an ideally minimal change in the mutable problem parameters such that an optimal solution of the changed problem exists fulfilling some desired criterion. It was shown that so-called weak CEs can be calculated by solving a computationally demanding bilevel optimization problem, which is $\Sigma_2^p$-complete if decision variables and mutable parameters are restricted to integers. Hence, the known exact solution methods are expected to fail on practically relevant problem sizes. In this work we derive a fast heuristic algorithm by approximating the feasible region of the CE problem with a decision tree for which CEs can be calculated more quickly. To this end we analyze the mathematical structure of the feasible region and develop a hand-crafted decision tree which applies splits imitating this structure. Additionally, we develop several techniques to increase the performance of our method. The practicability of our methods is supported by computational experiments on the knapsack problem and the facility location problem.
\end{abstract} 

\noindent\textbf{Keywords:} explainability; counterfactual explanations; AI \& optimization

\section{Introduction}

\subsection{Motivation}

Mathematical optimization is a central tool for decision making in Operations Research and Management Science and is applied in a wide range of industries covering sectors as logistics, health-care, finance or disaster management. Often, exact solution methods for a certain optimization model can be developed and implemented, which provide an optimal solution of the corresponding model. However, the achieved ``optimality'' can give a false sense of explainability. First, the same real-world problem might be approached by different optimization models involving different constraints, objectives or parameter estimations, leading to different optimal solutions. Second, most of the exact algorithms still lack meaningful explanations for stakeholders which are not technical experts \cite{aydin2026explainable}. Hence, explanations which provide insights on the behaviour of the model and corresponding algorithm for different stakeholders are needed \cite{aydin2026explainable,zhang2025decision}. One concept gaining increasing attention which can be used to generate comprehensible explanations for an optimization model are counterfactual explanations (CEs) \cite{korikov2021counterfactual,kurtz2026counterfactual,engelhardt2025counterfactual}. Given the currently applied optimization model and an optimal solution which has property A, a counterfactual question asks which changes of the problem parameters have to be made such that a solution with property B is optimal. As an example, consider a facility location problem which is used to decide in which districts a health-care facility is opened. Given the optimal decision to open a set of facilities, a stakeholder might ask the question: ``Why is there no facility opened in our district?'' A counterfactual explanation can give an answer such as, ``If the population of your district were $10\%$ higher, the model would have opened a facility in your district.''

Only a few methods exist which are able to calculate CEs for problems with integer decision variables. The works of Korikov and Beck study CEs for binary problems where only the objective parameters are allowed to change \cite{korikov2021counterfactual,korikov2023objective}. Later, in \cite{engelhardt2025counterfactual} exact algorithms were developed for the case of binary problems where both objective and constraint parameters are allowed to change. However, the proposed algorithm is computationally tractable only for small instance sizes and problems where the parameters of a single constraint are allowed to change, while the more general setup is not assumed to scale well for realistic problem sizes. The authors prove that the problem even with a single mutable constraint is $\Sigma_2^p$-complete and hence tractable exact algorithms for large-scale problems are currently out of reach.  

\paragraph{Our Contributions:}

In this work, we derive a fast heuristic algorithm to calculate CEs for integer problems where the parameters of multiple constraints are allowed to change. To this end, we analyze the structure of the feasible region of the CE problem and approximate it with a hand-crafted decision tree for which CEs can be calculated quickly. Our main contributions are as follows. (i) We analyze the mathematical structure of the feasible region of the weak CE problem and show that it can be described as a finite union of polyhedra.
(ii) We prove that under mild assumptions, an optimal CE lies on a hyperplane parametrized by a feasible solution of the original problem which we attempt to explain. 
(iii) We develop a decision tree which exploits the latter insight by generating splits which coincide with the hyperplanes potentially containing optimal CEs.
(iv) We derive an algorithm to train such decision trees on randomly generated parameter samples and engineer several  techniques to improve the accuracy of the decision tree and the computation time of our algorithm.
(v) We present experiments on the knapsack problem and the facility location problem demonstrating the capability of the proposed method to generate CEs of good quality significantly faster than available benchmarks.

\subsection{Related Literature}
The concept of counterfactual explanations was first introduced for machine learning models in \cite{wachter2017counterfactual} and has been receiving increasing attention since then; see e.g. the surveys \cite{guidotti2024counterfactual,verma2024counterfactual}.

Later, the concept was translated to optimization problems. In \citep{korikov2021counterfactual,korikov2023objective} counterfactuals for binary linear optimization problems where only objective parameters are mutable were studied, applying techniques from constraint programming and inverse optimization. In \cite{forel2023explainable} the authors study counterfactuals in the context space for a predict-then-optimize approach. Afterward, \cite{kurtz2026counterfactual} identified three types of counterfactuals of interest for optimization problems -- weak, strong, and relative CEs -- and analyzed them for linear optimization problems with continuous decision variables. A fast heuristic based on an alternating direction method was developed in \cite{lefebvre2025computing} to calculate feasible weak CEs for continuous linear optimization problems. Exact methods for weak and strong CEs for integer linear optimization problems were developed in \cite{engelhardt2025counterfactual} for the case where mutable parameters may also appear in the constraints. Relative CEs have since been investigated for contextual stochastic optimization \citep{ramirez2025relative}, and CEs have been used to explain efficiency targets in data envelopment analysis \citep{bogetoft2024counterfactual} and to detect the minimum number of constraints to make a problem infeasible in constraint programming \citep{dasgupta2024counterfactual}.

\paragraph{Interpretability in optimization.} In contrast to the concept of explainability where a justification for the selection of a specific solution is required, interpretability requires the utilized model or algorithm to be inherently interpretable. The design of such models must suit the affected stakeholders' abilities and needs based on their domain knowledge. Due to this requirement, using interpretable methods leads to solutions which can be explained without using further tools. However, the use of interpretable methods often requires simplifications leading to a potential loss in solution quality compared to the use of black-box methods such as industrial MIP solvers. Besides problem-specific approaches \cite{ciocan2022interpretable, bertsimas2021interpretable} and simple heuristics \cite{nguyen2012computational}, the use of interpretable surrogate models has been proposed. The latter include decision trees of restricted size \cite{goerigk2023framework, goerigk2024feature, lumbreras2024explaining} and sparse linear models \cite{goerigk2026linear} mapping instances to solutions. For an extended overview of the current literature on interpretability in optimization we refer to \cite{lumbreras2025interpretable} and \cite{goerigk2024feature}.

\paragraph{Other explainability approaches.} Beyond CEs and the use of interpretable models, other approaches to ensure explainability in mathematical optimization exist. These include the use of feature selection \cite{aigner2025feature}, local explanations \cite{otto2025coherent}, and the enforcement of a solution's similarity to historical decisions \cite{aigner2024framework}. In two-stage stochastic optimization a so-called certificate, a set of representative scenarios, can also be provided in order to  justify solutions \cite{pisinger2026deep}. Although sensitivity analysis \cite{borgonovo2016sensitivity} can be used to provide insights into causal relationships, it is questionable to what extent it actually provides sufficient explanations \cite{aydin2026explainable}. Explanations were also studied for metaheuristics \cite{zhou2024evolutionary,fyvie2025towards,catalano2024explaining}. Additionally, the use of LLMs to produce human-accessible explanations has been recently proposed \cite{chen2025optichat,zhang2025decision,xiao2026deepor}. A summary of the current state-of-the-art methods and a call for interdisciplinary action can be found in \cite{aydin2026explainable}.

\section{General Approach}\label{sec:general}
\subsection{Problem Definition}\label{sub:general:problem}
In this work we study the Weak Counterfactual Explanation Problem (WCEP) applied to general linear integer optimization problems. We define the present problem with present parameters $(\hat c, \hat A, \hat b)\in  \mathbb R^n \times \mathbb R^{m\times n} \times \mathbb R^m$ as
\begin{equation}\label{eq:present_problem}\tag{PP}
    \begin{aligned}
        \min \ & \hat c^\top x \\
        s.t. \quad & \hat A x \ge \hat b \\
        & x\in \mathcal X
    \end{aligned}
\end{equation}
where $\mathcal X\subseteq \mathbb Z_+^n$ and $\mathbb Z_+^n$ is the set of all $n$-dimensional vectors with non-negative and integer entries. \textcolor{black}{We assume that $\mathcal X$ can be expressed through further linear constraints, such as lower and upper bounds.} In line with the definition in \cite{kurtz2026counterfactual} let $\mathcal H \subseteq \mathbb R^n \times \mathbb R^{m\times n} \times \mathbb R^m$ be the compact and convex mutable parameter space with $(\hat c, \hat A, \hat b)\in\mathcal H$ and $\mathcal D\subseteq \mathbb Z_+^n$ the favored solution space. A weak counterfactual question asks for the minimum change of the present parameters, $(\hat c, \hat A, \hat b) \to (c,A,b)$, such that at least one optimal solution of the problem parameterized by the new parameters lies in $\mathcal D$. \textcolor{black}{In the following, we use the notation $[m]=\{1,\ldots,m\}$ to denote index sets.}

\begin{example}
Consider the capacitated facility location problem where we have a set of potential facility locations $\mathcal I$ and a set of customers $\mathcal J$. Opening a facility in location $i\in \mathcal I$ comes with a cost of $f_i$ and each facility has a capacity of $C_i$. Each customer $j \in \mathcal J$ has a demand of $d_j$ which has to be served from the opened facilities. The costs for serving customer $j$ from facility $i$ is $c_{ij}$. This problem can be formulated as
\begin{align*}
\min \ & \sum_{i \in \mathcal I} \sum_{j \in \mathcal J} c_{ij} y_{ij} + \sum_{i \in \mathcal I} f_i x_i\\
&\sum_{i \in \mathcal I} y_{ij} \ge d_j && \forall j \in \mathcal J  \\
&\sum_{j \in \mathcal J} -y_{ij} + C_i x_i \geq 0&& \forall i \in \mathcal I  \\
&y_{ij} \in \mathbb{Z}_+ && \forall i \in \mathcal I, j \in \mathcal J \\
&x_i \in \{0,1\} && \forall i \in \mathcal I
\end{align*}
where $y_{ij}$ is the amount of $j$'s demand served by facility $i$ and $x_i$ has value $1$ if facility $i$ is opened and $0$ otherwise. In an optimal solution $(x^*,y^*)$ of the problem it may be that a certain district which is covering the locations $\mathcal I_D\subset \mathcal I$ does not get any facility, i.e., $x_i^* = 0$ for all $i\in \mathcal I_D$. The following counterfactual question could be asked: ``What is the minimum change in the demand values $d_j$ such that at least one facility will be opened in district $\mathcal I_D$?'' In this case the favored solution space is defined as $\mathcal D=\{ (x,y): \sum_{i\in \mathcal I_D}x_i \ge 1\}$\textcolor{black}{. Each demand value $d_j$ is mutable in $d_j\in [0,\overline d_j]$ with $\overline d_j$ being some given upper bound.}
\end{example}

Formally, a weak CE is defined as follows.
\begin{definition}
A point $(c, A, b)\in \mathcal H$ is a weak counterfactual explanation if
\[
\argmin_{z\in\mathcal X: Az \ge b} c^\top z \ \cap \ \mathcal D \neq \emptyset .
\]
\end{definition}
An optimal weak CE is a weak CE that minimizes the distance to the present parameters implied by norm $\| \cdot \|$. It can be calculated by the following problem,
\begin{equation}\label{eq:weak_CE}\tag{WCEP}
\begin{aligned}
    \inf \ & \|(\hat c, \hat A, \hat b) - (c, A, b)\|\\
    s.t. \ & x \in \argmin_{z\in\mathcal X: Az \ge b} \ c^\top z  \\
    & x\in\mathcal D \\
    & (c,A,b)\in \mathcal H.
\end{aligned}
\end{equation}
We use the infimum operator here, since the feasible region of the problem can be open (as our following results indicate) and thereby we cannot ensure that \eqref{eq:weak_CE}  has an optimal solution. Due to the compactness of $\mathcal H$ and continuity of $\| \cdot \|$ the problem is either bounded or infeasible. For ease of notation in the following we denote projection of the feasible region of this problem in the $(c,A,b)$-space as $\mathcal W$.  

\eqref{eq:weak_CE} can be interpreted as an optimistic bilevel problem and it was shown in \cite{kurtz2026counterfactual} that even for continuous present problems the CE problem can have several undesirable structural properties (such as open, non-convex or even disconnected feasible regions) which make the problem computationally hard to solve. For integer decision variables the problem is $\Sigma_2^p$-hard even for a single mutable constraint \cite{engelhardt2025counterfactual}. The latter results indicate that solving \eqref{eq:weak_CE} to optimality in realistic problem dimensions is currently not within reach and hence fast heuristic methods are needed. In this work we present a decision-tree-based heuristic method which calculates feasible solutions for \eqref{eq:weak_CE} in the integer case with multiple mutable constraints. 

\subsection{Analysis of the Feasible Region}\label{sub:general:feas}
In the following we derive a mathematical description of the feasible region of \eqref{eq:weak_CE} which we later use to develop hand-crafted decision trees as approximate surrogates of the region.

To get an understanding of the structure of the feasible region of \eqref{eq:weak_CE} we consider the following small example. Consider the present problem
\begin{equation}\label{eq:knapsack_example}
\begin{aligned}
\min & \ 3x_1 + 1x_2 + 2x_3 \\
s.t. \quad  & \hat a_1 x_1 + \hat a_2x_2 + 3x_3 \ge 10 \\
& x_1+x_2+x_3 = 2 \\
& x_1,x_2,x_3\in \{ 0,1\},
\end{aligned}
\end{equation}
where only the constraint coefficients $a_1,a_2$ are mutable and the mutable parameter space is the whole $\mathbb R^2$. The favored solution space is defined as $\mathcal D = \{ x\in \{ 0,1\}^3: x_1+x_2 = 1\}$. We can now go through every possible feasible solution $x\in \mathcal X = \{ (1,1,0), (1,0,1), (0,1,1)\}$ and define the region $\mathcal F(x)$ in the $(a_1,a_2)$-space for which the corresponding solution $x$ is feasible. We obtain
\begin{align*}
\mathcal F((1,1,0)) &= \{ (a_1,a_2): a_1+a_2 \ge 10\}, \\
\mathcal F((1,0,1)) &= \{ (a_1,a_2): a_1 \ge 7\}, \\
\mathcal F((0,1,1))& = \{ (a_1,a_2): a_2 \ge 7\},
\end{align*}
i.e., every region is a polyhedron in the $(a_1,a_2)$-space with a boundary defined by hyperplanes with normal vector $(x_1,x_2)$ coming from a feasible solution $x$. The corresponding hyperplanes lead to a partition of the $(a_1,a_2)$-space into cells, where in each cell a fixed set of solutions from $\mathcal X$ is feasible; see Figure \ref{fig:example_feasible_region}. For each of these solutions which are feasible in a certain cell (which could be the empty set), we can calculate the corresponding objective value, which is fixed since the objective coefficients are not mutable. If a solution from the favored solution space $\mathcal D$ is optimal in the cell, the corresponding region is feasible, otherwise it is infeasible. For the given objective parameters in \eqref{eq:knapsack_example} this leads to the feasible region shown in blue in Figure \ref{fig:example_feasible_region}. This feasible region is a union of polyhedra.
\begin{figure}
    \centering
    \begin{tikzpicture}[scale=0.5]
  \draw[->] (0,0) -- (12,0) node[right] {$a_1$};
  \draw[->] (0,0) -- (0,12) node[above] {$a_2$};
  \foreach \x in {5,10}
    \draw (\x,0.2) -- (\x,-0.2) node[below] {\x};
  \foreach \y in {5,10}
    \draw (0.2,\y) -- (-0.2,\y) node[left] {\y};
  \node[below left] at (0,0) {0};
  \fill[blue!20] (0,7) -- (0,11) -- (11,11) -- (11,7) -- cycle;
  \fill[blue!20] (7,0) -- (10,0) -- (7,3) -- cycle;
   \draw[thick,black] (0,10) -- (10,0) node[above right] {$a_2 = 10 - a_1$};
  \draw[thick,black] (7,0) -- (7,11) node[above left] {$a_1 = 7$};
  \draw[thick,black] (0,7) -- (11,7) node[below right] {$a_2 = 7$};
\end{tikzpicture}
    \caption{In blue the feasible region of the \eqref{eq:weak_CE} for present problem \eqref{eq:knapsack_example}.}
    \label{fig:example_feasible_region}
\end{figure}

The following theorem shows that this structure holds in general.
\begin{lemma}\label{lem:feasible_region}
    The feasible region of \eqref{eq:weak_CE} in the $(c,A,b)$-space is given as
    \begin{equation}\label{eq:feasible_region_formular}
    \mathcal W = \bigcup_{x\in \mathcal D}\ \bigcup_{\mathcal Y\subseteq \mathcal X: x\in \mathcal Y}\ \bigcup_{\sigma: \mathcal X\setminus \mathcal Y \to [m]}  \mathcal F(x,\mathcal Y, \sigma) \cap \mathcal H
    \end{equation}
    where 
\begin{equation*}
\begin{aligned}
\mathcal F(x,\mathcal Y, \sigma) = \big\{ (c,A,b):\  & Ay\ge b \ && \forall y\in\mathcal Y, \\
& c^\top x \le c^\top y \ && \forall y\in \mathcal Y, \\
& a_{\sigma(y)}^\top y < b_{\sigma(y)} \ && \forall y\in \mathcal X\setminus \mathcal Y \ \big\} 
\end{aligned}
\end{equation*}
and $a_{\sigma(y)}$ is the row of $A$ with index $\sigma (y)$.
\end{lemma}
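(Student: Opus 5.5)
We prove the set equality by double inclusion. The idea is to read $\mathcal Y$ as the set of points of $\mathcal X$ that are feasible for the parameters $(c,A,b)$, and $\sigma$ as a choice, for each infeasible point, of one constraint it violates. Throughout, $\mathcal W$ is the projection onto the $(c,A,b)$-space of the feasible region of \eqref{eq:weak_CE}. Thus $(c,A,b)\in\mathcal W$ iff $(c,A,b)\in\mathcal H$ and there exists $x\in\mathcal D$ with $x\in\argmin_{z\in\mathcal X: Az\ge b} c^\top z$.

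For the inclusion ``$\subseteq$'', we take $(c,A,b)\in\mathcal W$ and a corresponding $x\in\mathcal D$ that is optimal for the problem with parameters $(c,A,b)$. We set $\mathcal Y:=\{y\in\mathcal X: Ay\ge b\}$, the feasible set of the changed problem. Then $x\in\mathcal Y$, and every $y\in\mathcal Y$ satisfies $Ay\ge b$ and, by optimality of $x$, $c^\top x\le c^\top y$. For each $y\in\mathcal X\setminus\mathcal Y$ the system $Ay\ge b$ fails, so some row index $i$ has $a_i^\top y<b_i$; we let $\sigma(y)$ be such an index. This defines $\sigma:\mathcal X\setminus\mathcal Y\to[m]$ with $(c,A,b)\in\mathcal F(x,\mathcal Y,\sigma)\cap\mathcal H$.

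For the inclusion ``$\supseteq$'', we take $(c,A,b)\in\mathcal F(x,\mathcal Y,\sigma)\cap\mathcal H$ with $x\in\mathcal D$ and $x\in\mathcal Y$. The key step is to show that $\mathcal Y$ is exactly the feasible set $\{z\in\mathcal X: Az\ge b\}$.
\begin{itemize}
\item Every $y\in\mathcal Y$ satisfies $Ay\ge b$ by the first constraint family.
\item Every $y\in\mathcal X\setminus\mathcal Y$ violates row $\sigma(y)$, so it is infeasible.
\end{itemize}
Hence $x$ is feasible, and the second constraint family gives $c^\top x\le c^\top z$ for all feasible $z$. So $x\in\argmin_{z\in\mathcal X:Az\ge b}c^\top z\cap\mathcal D$, and $(c,A,b)\in\mathcal W$.

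No step is a real obstacle. The only points needing care are the following.
\begin{itemize}
\item The choice of $\sigma$ in the first inclusion uses only the existence of a violated row; a choice function exists without any finiteness assumption on $\mathcal X$, although the union may then be over infinitely many index tuples.
\item The strict inequalities in $\mathcal F$ correctly encode infeasibility, which explains the possibly open feasible region.
\item The argmin is attained at $x$, so no issue arises when $\mathcal X$ is infinite.
\end{itemize}
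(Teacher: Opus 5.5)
Your proof is correct and matches the paper's argument essentially step for step. For ``$\subseteq$'' you take $\mathcal Y$ to be the feasible set of the changed problem and $\sigma$ a choice of violated row for each infeasible point; for ``$\supseteq$'' you show that $\mathcal Y$ is exactly the feasible set, so the second constraint family makes $x$ optimal. Your side remarks, that finiteness of $\mathcal X$ is not needed and that the strict inequalities encode infeasibility, are accurate and slightly sharper than what the paper spells out.
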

\begin{proof}
We first show that every feasible point $(c,A,b)$ of \eqref{eq:weak_CE} must lie in one of the sets $\mathcal F(x,\mathcal Y, \sigma)$. Let $(c,A,b)\in\mathcal H$ be an arbitrary feasible point of \eqref{eq:weak_CE}. Let $\mathcal Y'$ be the non-empty set of feasible solutions for the given parameters, i.e.
\[
\mathcal Y':=\{ y\in \mathcal X: Ay\ge b\}.
\]
Since $(c,A,b)$ is feasible for \eqref{eq:weak_CE} there must exists one solution from $\mathcal D$ which is in $\mathcal Y'$ and which is optimal under objective coefficients $c$ over all solutions in $\mathcal Y'$. We denote such solution as $x'$. Hence, it holds $Ay\ge b$ and $c^\top x' \le c^\top y$ for all $y\in \mathcal Y'$. Furthermore, all solutions $y\in\mathcal X\setminus \mathcal Y'$ must be infeasible under constraint parameters $A,b$, i.e., for every $y\in\mathcal X\setminus \mathcal Y'$ there exists an index $\sigma' (y)\in [m]$ for which the corresponding constraint is violated, i.e., $a_{\sigma' (y)}^\top y < b_{\sigma' (y)}$. It follows that $(c,A,b)\in \mathcal F(x', \mathcal Y', \sigma')$.

We now show that every point in the set \eqref{eq:feasible_region_formular} is feasible for \eqref{eq:weak_CE}. Let $(c,A,b) \in \mathcal F(x'',\mathcal Y'', \sigma'')\cap \mathcal H$ for some $x''\in\mathcal D$, $\mathcal Y''\subseteq \mathcal X$ and $\sigma'': \mathcal X\setminus \mathcal Y'' \to [m]$. By definition $(c,A,b)\in\mathcal H$. If we can show that $x''$ is an optimal solution of problem 
\begin{equation}\label{eq:linear_prob_proof_theorem}
\begin{aligned}
    \min \ & c^\top x \\
    s.t. \quad & Ax\ge b \\
    & x\in\mathcal X,
\end{aligned}
\end{equation}
then we proved the result. Since it holds $a_{\sigma''(y)}^\top y<b_{\sigma''(y)}$ for all $y\in \mathcal X\setminus \mathcal Y''$, all solutions in $\mathcal X\setminus \mathcal Y''$ must be infeasible. Furthermore, since all solutions in $\mathcal Y''$ are feasible by definition of $\mathcal F(x'',\mathcal Y'', \sigma'')$, the latter problem is equivalent to
\begin{align*}
\min \ & c^\top x \\
s.t. \quad & x\in \mathcal Y''.
\end{align*}
Since $x'' \in \mathcal Y''$ and by definition of $\mathcal F(x'',\mathcal Y'', \sigma'')$ we have $c^\top x''\le c^\top y$ for all $y\in \mathcal Y''$, it follows that $x''$ must be an optimal solution of the latter problem, and hence of \eqref{eq:linear_prob_proof_theorem}.
\end{proof}
The essential insight from this lemma is that the feasible region of \eqref{eq:weak_CE} is a union of sets $\mathcal F(x,\mathcal Y, \sigma)\cap \mathcal H$ and each set $\mathcal F(x,\mathcal Y, \sigma)$ is a (potentially open) polyhedron in the flattened $(c,A,b)$-space. Since an optimal CE is the closest point to $(\hat c, \hat A, \hat b)$ in the feasible region, either its component values must be equal to the present parameters or lie on the boundary of one of the sets $\mathcal F(x,\mathcal Y, \sigma)\cap \mathcal H$. This leads to the following necessary condition for optimal CEs. 
We call the mutable set $\mathcal H$ row-wise, if $\mathcal H = \mathcal H_c \times \mathcal H_1\times \cdots \times \mathcal H_m$ and $(c,A,b) \in \mathcal H$ if and only if $c\in \mathcal H_c$ and $(a_i,b_i)\in \mathcal H_i$ for all $i\in [m]$ where $a_i$ is the $i$-th row of $A$.
\begin{theorem}[Necessary Condition] \label{thm:necessary_condition}
Assume $\| \cdot\|$ is a strictly monotone norm, $\mathcal X$ is finite and $\mathcal H$ is compact, row-wise and convex and $(c^*,A^*,b^*)$ minimizes $\| (c,A,b)-(\hat c,\hat A, \hat b)\|$ over the closure of $\mathcal W$. Then, for every $i\in [m]$ either $(a_i^*,b_i^*) = (\hat a_i, \hat b_i)$ or there exists an $x^i\in \mathcal X$ such that $(a_i^*)^\top x^i  = b_i^*$. Furthermore, either $c^* = \hat c$ or there exist $x^0\in \mathcal D$ and $y^0\in \mathcal X$ with $y^0\neq x^0$, such that $(c^*)^\top (x^0-y^0)  = 0$.
\end{theorem}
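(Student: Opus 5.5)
The plan is to argue by contradiction with a local perturbation that moves one block of parameters toward the present parameters while staying in $\operatorname{cl}\mathcal W$. The structural fact I will use, which follows from the proof of Lemma~\ref{lem:feasible_region}, is that membership in $\mathcal W$ depends only on two things: the set of feasible solutions $\{y\in\mathcal X: Ay\ge b\}$, which is determined by the signs of $a_j^\top y-b_j$ for $y\in\mathcal X$ and $j\in[m]$, and the comparisons $c^\top x$ versus $c^\top y$. Finiteness of $\mathcal X$ gives uniform margins whenever the relevant quantities are nonzero at the limit point.

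\emph{Constraint rows.} Fix $i\in[m]$ and suppose $(a_i^*,b_i^*)\neq(\hat a_i,\hat b_i)$ and $(a_i^*)^\top x\neq b_i^*$ for all $x\in\mathcal X$. Since $\mathcal X$ is finite, there is $\delta>0$ with $|(a_i^*)^\top x-b_i^*|\ge\delta$ for all $x\in\mathcal X$. Take a sequence $(c^k,A^k,b^k)\in\mathcal W$ converging to $(c^*,A^*,b^*)$. For $t\in[0,1]$ define $(a_i^t,b_i^t)=(1-t)(a_i^*,b_i^*)+t(\hat a_i,\hat b_i)$, and let $(c^k,A^{k,t},b^{k,t})$ be the point obtained from $(c^k,A^k,b^k)$ by replacing its $i$-th row $(a_i^k,b_i^k)$ by $(a_i^k,b_i^k)+(a_i^t,b_i^t)-(a_i^*,b_i^*)$. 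By continuity and the uniform margin $\delta$, there are $\bar t>0$ and $k_0$ such that for all $t\le\bar t$ and $k\ge k_0$ the sign of $(a_i^{k,t})^\top x-b_i^{k,t}$ equals the sign of $(a_i^k)^\top x-b_i^k$ for every $x\in\mathcal X$. Hence the feasible set is unchanged, the objective $c^k$ is unchanged, and the optimal set is unchanged, so the point still satisfies the optimality and $\mathcal D$ conditions of \eqref{eq:weak_CE}. Membership in $\mathcal H$ follows from the row-wise structure: the $i$-th row of the limit point is $(a_i^t,b_i^t)\in\mathcal H_i$ by convexity, since $(\hat a_i,\hat b_i)\in\mathcal H_i$. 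The perturbed sequence itself need not lie in $\mathcal H$, so the cleaner route is to perturb toward the limit point. I would therefore replace the $i$-th row of $(c^k,A^k,b^k)$ by $(1-t)(a_i^k,b_i^k)+t(\hat a_i,\hat b_i)$, which lies in $\mathcal H_i$ by convexity because $(a_i^k,b_i^k)\in\mathcal H_i$. Passing $k\to\infty$ shows that $(c^*,A^*_t,b^*_t)$, with row $i$ replaced by $(a_i^t,b_i^t)$, lies in $\operatorname{cl}\mathcal W$. Every component of $(a_i^t,b_i^t)-(\hat a_i,\hat b_i)$ equals $(1-t)$ times the corresponding component of $(a_i^*,b_i^*)-(\hat a_i,\hat b_i)$, and at least one of these is nonzero. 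Strict monotonicity of the norm then gives a strictly smaller distance, contradicting minimality.

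\emph{Objective.} Suppose $c^*\neq\hat c$ and $(c^*)^\top(x-y)\neq0$ for all $x\in\mathcal D$ and $y\in\mathcal X\setminus\{x\}$. Again take $(c^k,A^k,b^k)\in\mathcal W\to(c^*,A^*,b^*)$ and set $c^{k,t}=(1-t)c^k+t\hat c$, which lies in $\mathcal H_c$ by convexity. Finiteness gives a uniform margin $|(c^*)^\top(x-y)|\ge\delta$. Hence for small $t$ and large $k$ all signs of $(c^{k,t})^\top(x-y)$ with $x\in\mathcal D$ agree with those of $(c^k)^\top(x-y)$. Let $x^k\in\mathcal D$ be the favored optimal solution for $(c^k,A^k,b^k)$. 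It satisfies $(c^k)^\top x^k\le(c^k)^\top y$ for every feasible $y$, and these inequalities are in fact strict for $y\neq x^k$ once $k$ is large. They therefore persist under $c^{k,t}$, while the feasible set is unchanged because $A^k,b^k$ are untouched. So $(c^{k,t},A^k,b^k)\in\mathcal W$, and in the limit $((1-t)c^*+t\hat c,A^*,b^*)\in\operatorname{cl}\mathcal W$. This point is strictly closer by strict monotonicity, again a contradiction.

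The main obstacle I expect is making the closure argument rigorous. Perturbations must be done on the approximating sequence in $\mathcal W$, not on the limit point, which may itself lie outside $\mathcal W$. The margins must also be uniform in both $k$ and $t$. Finiteness of $\mathcal X$ is exactly what provides this uniformity, and the row-wise convexity of $\mathcal H$ is what keeps each single-block perturbation inside $\mathcal H$.
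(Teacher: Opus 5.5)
Your argument is correct, but it takes a different route from the paper's.

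\textbf{The paper's route.} It first invokes Lemma~\ref{lem:feasible_region} to place the minimizer in the closure of a single piece $\mathcal F(x,\mathcal Y,\sigma)\cap\mathcal H$. This works because, for finite $\mathcal X$, there are finitely many pieces, and the closure of a finite union is the union of the closures. The row-wise structure then turns this piece into a product $\prod_i(\mathcal F_i\cap\mathcal H_i)$. If no $x^i$ attains equality, the limit row $(a_i^*,b_i^*)$ satisfies every defining inequality of $\mathcal F_i$ strictly, so it is an interior point of $\mathcal F_i$. A short step along the segment toward $(\hat a_i,\hat b_i)$ therefore stays in $\mathcal F_i\cap\mathcal H_i$.

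\textbf{Your route.} You never decompose $\mathcal W$. Instead you perturb the terms of an approximating sequence in $\mathcal W$ by convex combinations with the present row. The uniform margin shows that the feasible and optimal sets of the perturbed problems are unchanged, so the perturbed terms stay in $\mathcal W$, and only then do you pass to the limit.

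\textbf{What each buys.} The paper's version makes the geometry transparent (an interior point of an open polyhedron) and reuses the lemma. However, it applies convexity at the limit point, which needs $(a_i^*,b_i^*)\in\mathcal H_i$ and hence closedness of $\mathcal H$. Yours is self-contained and applies convexity only at points already in $\mathcal H$. So closedness of $\mathcal H$ is not needed beyond the assumed existence of the minimizer. The price is the bookkeeping of uniformity in $k$ and $t$, which you handle correctly.

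\textbf{Two small clean-ups.}
First, state the sign-preservation claim for the convex-combination perturbation you actually use, rather than for the additive one you abandon. It follows from the same margin estimate, since the new row value is $(1-t)\big((a_i^k)^\top x-b_i^k\big)+t\big(\hat a_i^\top x-\hat b_i\big)$.
Second, in the objective part, take the margin over $x\in\mathcal D\cap\mathcal X$ rather than over $\mathcal D$, which need not be finite. Only the favored optimal solutions $x^k\in\mathcal D\cap\mathcal X$ enter the argument anyway.
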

\begin{proof}
Let $(c^*,A^*,b^*)$ be a minimizer of $\| (c,A,b)-(\hat c,\hat A, \hat b)\|$ over the closure of $\mathcal W$. This minimizer exists since $\mathcal H$ is compact and hence the closure of the feasible region must be compact as well. Then by Lemma \ref{lem:feasible_region} there exists an $x\in \mathcal D$, $\mathcal Y\subseteq \mathcal X$ with $x\in \mathcal Y$ and $\sigma: \mathcal X\setminus \mathcal Y \to [m]$ such that $(c^*,A^*,b^*)\in \cl \left(\mathcal F(x,\mathcal Y, \sigma)\cap \mathcal H\right)$. First note, that if $\mathcal H \subseteq \mathcal F(x,\mathcal Y, \sigma)$, then $(\hat c, \hat A, \hat b)\in \mathcal F(x,\mathcal Y, \sigma)\cap \mathcal H$ and hence $(c^*,A^*,b^*) = (\hat c, \hat A, \hat b)$ since this leads to a distance of zero. Hence, in this case the necessary condition is true. 

Now consider the opposite case that $\mathcal F(x,\mathcal Y, \sigma)\cap \mathcal H \neq \mathcal H$. Due to the row-wise property of $\mathcal H$ we can rewrite
\[
 \mathcal F(x,\mathcal Y, \sigma)\cap \mathcal H = \left(\mathcal F_c(x,\mathcal Y, \sigma)\cap \mathcal H_c \right) \times\left( \mathcal F_1(x,\mathcal Y, \sigma)\cap \mathcal H_1 \right)\times \cdots \times \left( \mathcal F_m(x,\mathcal Y, \sigma)\cap \mathcal H_m \right)
\]
where
\begin{align*}
\mathcal F_c(x,\mathcal Y, \sigma) & = \left\{ c: c^\top x - c^\top y \le 0 \ \forall y\in \mathcal Y\right\} \\
\mathcal F_i(x,\mathcal Y, \sigma) & = \left\{ (a,b): a^\top y \ge b \ \forall y\in\mathcal Y, \ a^\top y < b \ \forall y\in\mathcal X\setminus \mathcal Y : \sigma(y)=i\right\} \quad \forall i\in [m],
\end{align*}
i.e., we can consider each row of the problem (including the objective function) independently. Since $(c^*,A^*,b^*)$ is a minimizer of the distance to $(\hat c, \hat A, \hat b)$ in $\cl\left(\mathcal F(x,\mathcal Y, \sigma)\cap \mathcal H\right)$ it must hold: if $(\hat a_i,\hat b_i)\in \cl\left( \mathcal F_i(x,\mathcal Y, \sigma)\cap \mathcal H_i\right)$, then $(a_i^*,b_i^*)=(\hat a_i,\hat b_i)$ due to the strict monotonicity of the norm. We will show in the following, if that is not the case, then there must exist an $x^i\in\mathcal X$ such that $(a_i^*)^\top x^i  = b_i^*$. Assume the opposite, that such an $x^i$ does not exist. Then, since $(a_i^*,b_i^*)\in \cl\left( \mathcal F_i(x,\mathcal Y, \sigma)\right)$, it must hold $(a_i^*)^\top y - b_i^* > 0$ for all $y\in\mathcal Y$ and $(a_i^*)^\top y - b_i^*< 0$ for all $y\in\mathcal X\setminus \mathcal Y$ with $\sigma(y)=i$ and it follows $(a_i^*,b_i^*)\in \mathcal F_i(x,\mathcal Y, \sigma)$. Furthermore, since $\mathcal X$ is finite there exists a ball with radius $\varepsilon >0$ in the norm $\| \cdot \|$ around  $(a_i^*,b_i^*)$, denoted as $\mathcal B_\varepsilon\left( (a_i^*,b_i^*)\right)$, such that $\mathcal B_\varepsilon\left( (a_i^*,b_i^*)\right)\subseteq \mathcal F_i(x,\mathcal Y, \sigma)$. Since $\mathcal H$ is convex we know that the whole line
\[
\lambda (a_i^*,b_i^*) + (1-\lambda) (\hat a_i,\hat b_i) \in \mathcal H_i
\]
for all $\lambda\in [0,1]$. Hence, there exists a point on the line with $0<\lambda^*<1$ which lies in $\mathcal B_\varepsilon\left( (a_i^*,b_i^*)\right) \cap  \mathcal F_i(x,\mathcal Y, \sigma)\cap\mathcal H_i$, and
\[
\| \lambda^* (a_i^*,b_i^*) + (1-\lambda^*) (\hat a_i,\hat b_i) - (\hat a_i,\hat b_i) \| = \lambda^* \| (a_i^*,b_i^*) - (\hat a_i,\hat b_i) \| < \| (a_i^*,b_i^*) - (\hat a_i,\hat b_i) \| 
\]
where the latter inequality again follows from the strict monotonicity of the norm. This shows that we found a feasible point which has better objective value than the minimizer which is a contradiction.

The same argumentation can be used to show that the necessary condition on the $c$-parameters is true.
\end{proof}

\subsection{Tree-Approximations of the Feasible Region}\label{sub:general:tree}
In the following we assume that the objective parameters $\hat c$ are immutable and the mutable parameter space is restricted to the $(A,b)$-space, i.e., a CE is a point $(A,b)\in \mathcal H\subseteq \mathbb R^{m\times n} \times \mathbb R^m$. Note that this assumption is not restrictive since we can always transform the present problem via an epigraph reformulation into the problem
\begin{align*}
        \min \ & \mu \\
    s.t. \quad &  \mu - \hat c^\top x \ge 0 \\
    & \hat Ax\ge \hat b \\
    & x\in\mathcal X, \ \mu\in\mathbb R
\end{align*}
and interpret the objective parameters $\hat c$ as constraint parameters. However, note that the artificial variable $\mu$ is continuous and to apply Theorem \ref{thm:necessary_condition} we need a finite solution space and hence $\mu$ has to be restricted to be integer.

The main idea of our method is to approximate the feasible region of Problem \eqref{eq:weak_CE} in the $(A,b)$-space with a CE-tailored decision tree, trained on parameter samples $\{(A^i,b^i)\}_{i=1}^{N}$. The tree decides for every point $(A,b)\in\mathcal H$ if the point is a weak counterfactual or not.  Following the analysis of the previous section, ideally the leaves of the tree should be the sets $\mathcal F(x,\mathcal Y,\sigma)$. Note that each of these sets, for fixed objective parameters $c$, is given by a finite number of constraints which are parametrized by solutions $y\in \mathcal X$. However, since the number of sets is exponential in the input, a tree of this size will not be tractable. Theorem~\ref{thm:necessary_condition} shows that an optimal solution of \eqref{eq:weak_CE} either is equal to the present parameter point or there exist $x^i\in\mathcal X$ such that it lies on the hyperplanes $\{ (A,b)\in \mathcal H: a_i^\top x^i = b_i\}$. These two observations motivate to train decision trees where each split is given by a solution $y\in \mathcal X$ and checks for a given point $(A,b)$ if $Ay\ge b$ or not. In case the answer is yes we continue with the left child-node and otherwise with the right child-node. For a given leaf let $\mathcal Y_F$ be the set of split-solutions $y_F$ for which we went to the left child-node and $\mathcal Y_I$ is the set of split-solutions $y_I$ for which we went to the right child-node. Then the corresponding leaf is the set
\[
\mathcal L(\mathcal Y_F,\mathcal Y_I):=\{ (A,b): Ay_F\ge b \ \forall y_F \in \mathcal Y_F, \  A y_I \ngeq b \ \forall y_I \in \mathcal Y_I \}.
\]
Each leaf is labeled by the decision tree either as \textit{feasible} or \textit{infeasible}. The approximated feasible region of \eqref{eq:weak_CE} given by the tree is the union of all leaves $\mathcal L(\mathcal Y_F,\mathcal Y_I)$ which are labeled as \textit{feasible}. In Figure \ref{fig:visualization_tree} we show such a tree which exactly recovers the feasible region of the present problem given in \eqref{eq:knapsack_example}.
\begin{figure}
    \centering
    \small
    \begin{tikzpicture}[
        >=stealth, line width=0.8pt,
        split/.style={rectangle, rounded corners, draw=black, align=center,
                      inner sep=4pt, minimum height=1cm},
        feas/.style={rectangle, rounded corners, draw=black, fill=blue!15,
                     align=center, inner sep=4pt, minimum height=1cm},
        infeas/.style={rectangle, rounded corners, draw=black, fill=black!8,
                       align=center, inner sep=4pt, minimum height=1cm},
        el/.style={midway, fill=white, inner sep=1.5pt, font=\footnotesize}
    ]
    \node[split]  (n1) at (0,0)     {Split $y^{1}=(0,1,1)$\\[1pt]$A\,y^{1}\ge b$ ? \ {\footnotesize($a_2\ge 7$)}};
    \node[feas]   (f1) at (-2.7,-2) {\textit{feasible}\\[1pt]$\mathcal{L}(\{y^{1}\},\emptyset)$};
    \node[split]  (n2) at (2.7,-2)  {Split $y^{2}=(1,1,0)$\\[1pt]$A\,y^{2}\ge b$ ? \ {\footnotesize($a_1{+}a_2\ge 10$)}};
    \node[infeas] (f2) at (0,-4)    {\textit{infeasible}\\[1pt]$\mathcal{L}(\{y^{2}\},\{y^{1}\})$};
    \node[split]  (n3) at (5.4,-4)  {Split $y^{3}=(1,0,1)$\\[1pt]$A\,y^{3}\ge b$ ? \ {\footnotesize($a_1\ge 7$)}};
    \node[feas]   (f3) at (2.7,-6)  {\textit{feasible}\\[1pt]$\mathcal{L}(\{y^{3}\},\{y^{1},y^{2}\})$};
    \node[infeas] (f4) at (8.1,-6)  {\textit{infeasible}\\[1pt]$\mathcal{L}(\emptyset,\{y^{1},y^{2},y^{3}\})$};
    \draw[->] (n1) -- (f1) node[el,above left]  {yes};
    \draw[->] (n1) -- (n2) node[el,above right] {no};
    \draw[->] (n2) -- (f2) node[el,above left]  {yes};
    \draw[->] (n2) -- (n3) node[el,above right] {no};
    \draw[->] (n3) -- (f3) node[el,above left]  {yes};
    \draw[->] (n3) -- (f4) node[el,above right] {no};
    \end{tikzpicture}
    \caption{Example of a decision tree with solution-based splits for the present problem~\eqref{eq:knapsack_example}. Each internal node tests whether a solution $y\in\mathcal{X}$ is feasible for the queried point $(A,b)$, branching to the left (\emph{yes}) or right (\emph{no}). The feasible leaves (blue) reproduce the feasible region shown in Figure~\ref{fig:example_feasible_region}.}
    \label{fig:visualization_tree}
\end{figure}

Note that a leaf $\mathcal L(\mathcal Y_F,\mathcal Y_I)$ is usually non-convex due to the conditions $A y_I \ngeq b$. However, it is possible to calculate a CE for the tree as shown in the following.

\paragraph{Calculating a counterfactual explanation for a given decision tree.} An optimal counterfactual explanation over the feasible region given by a fixed decision tree can be calculated by iterating over all leaves of the tree which are labeled as \textit{feasible} and calculate the points in the leaf which minimizes $\| (A,b)-(\hat A,\hat b)\|$. More precisely we solve the problem 
\[
\min_{\substack{\text{feasible} \\ \text{leaf }  \mathcal L}} \inf_{(A,b)\in \mathcal L\cap \mathcal H} \| (A,b)-(\hat A,\hat b)\|.
\]
For a given feasible leaf $\mathcal L:=\mathcal L(\mathcal Y_F,\mathcal Y_I)$ the inner minimization problem can be 
\textcolor{black}{approximated}
as 
\begin{subequations}
\begin{align}
    \min \ &   \| (\hat A, \hat b) - (A, b) \| \\
    s.t. \ & a_i^\top y_F \geq b_i & \forall i \in [m], y_F \in \mathcal Y_F & \label{eq:find_ce_for_tree:feas} \\
           & a_i^\top y_I \leq b_i + Mz_{i,y_I} - \epsilon & \forall i \in [m], y_I \in \mathcal Y_I & \label{eq:find_ce_for_tree:infeas1}\\
           & \sum_{i \in [m]} z_{i,y_I} \leq m-1 & \forall y_I \in \mathcal Y_I \label{eq:find_ce_for_tree:infeas2} \\
           & (A, b) \in \mathcal H \label{eq:find_ce_for_tree:mut_space}\\
           & z_{i,y_I} \in \{ 0,1\} & \forall i \in [m], y_I \in \mathcal Y_I .
\end{align}\label{eq:find_ce_for_tree}
\end{subequations}

Here Constraints \eqref{eq:find_ce_for_tree_row:feas} ensure that any solution $y_F\in \mathcal Y_F$ is feasible for the calculated counterfactual $(A,b)$. In contrast, Constraints~(\ref{eq:find_ce_for_tree:infeas1}) together with (\ref{eq:find_ce_for_tree:infeas2}) ensure that for every infeasible split given by $y_I\in\mathcal Y_I$ at least one of the constraints is violated, i.e., $y_I$ is infeasible for $(A,b)$. The big-M value ensures that in case the binary variable $z_{i,y_I}$ has value $1$, the corresponding constraint in \ref{eq:find_ce_for_tree:infeas1} is redundant. \textcolor{black}{An upper bound for a valid value of $M$ is given by $M:=\max_{i \in [m], y_I \in \mathcal Y_I,(A,b)\in \mathcal H} a_i^\top y_I-b_i+\epsilon$.}
Note that we are using a small value $\epsilon>0$ to ensure strict constraint violation $a_i^\top y_I < b_i$. In practice, \textcolor{black}{numerical difficulties must be taken into account when selecting this value, see the discussion in Section~\ref{sub:algs:pp}}. Finally, \eqref{eq:find_ce_for_tree:mut_space} ensures that the calculated $(A,b)$ point lies in the mutable parameter space $\mathcal H$.

\section{Detailed Algorithm}\label{sec:algs}
Our algorithm follows the steps shown in Figure \ref{fig:flow_chart}, which are described in detail in the following subsections. We first sample a training set of points $\mathcal S= \{ (A^i,b^i)\}^N_{i=1}$ by using a dynamic sampling scheme where each point is labeled either as feasible or infeasible. We can check if $(A^i,b^i)$ is a feasible CE by checking if
\begin{equation}\label{eq:check_if_CE}
\min \{ \hat c^\top x: A^i x \ge b^i, x\in\mathcal X\} =  \min \{ \hat c^\top x: A^i x \ge b^i, x\in\mathcal X\cap \mathcal D\}.
\end{equation}
After sampling and labeling, we apply a preprocessing that enriches the set of samples by resampling around valid CEs. We then train a decision tree $T$ using the CE-tailored splits as described in the previous subsection. Subsequently, we calculate CEs for each leaf of the tree labeled as feasible by solving problem \eqref{eq:find_ce_for_tree}. Afterward we apply post-processing which includes further resampling to account for points which might lie exactly on the feasibility boundary. For this purpose, three different line search approaches are utilized. Finally, we return the best CE found, i.e. the one which is closest to $(\hat A, \hat b)$.

\begin{figure}[h!]
    \centering
    \scalebox{0.7}{\begin{tikzpicture}[
        box/.style={
            rectangle, rounded corners, draw=black,
            text width=2.7cm, minimum height=1.6cm,
            align=center, inner sep=4pt
        },
        node distance=0.7cm,
        >=stealth
    ]
    \node (empty1) {};
    \node (sample)  [box, right=of empty1] {\textbf{Sampling} Sample $\mathcal S= \{ (A^i,b^i)\}^N_{i=1}$ and calculate labels};
    \node (prep)    [box, right=of sample]  {\textbf{Pre-Processing}\\Resample around already sampled CEs};
    \node (approx)  [box, right=of prep]     {\textbf{Training}\\Train decision tree to approximate feasible region $\mathcal{W}$};
    \node (calc_ce) [box, right=of approx]   {\textbf{Calculate CEs}\\Calculate CEs for the decision tree};
    \node (postp)   [box, right=of calc_ce]  {\textbf{Post-Processing}\\Find closer feasible CEs with line search approaches.};
    \node (empty2) [right=of postp] {};
    \draw [thick,->] (sample)  -- (prep);
    \draw [thick,->] (prep)    -- (approx);
    \draw [thick,->] (approx)  -- (calc_ce);
    \draw [thick,->] (calc_ce) -- (postp);
    \draw [thick,->] (postp)   -- (empty2);
    \node (s1) at (4.2,.3) {$\mathcal S$};
    \node (s2) at (7.9,.3) {$\mathcal S$};
    \node (t) at (11.6,.3) {$T$};
    \node (s3) at (15.3,.3) {$\mathcal S^T$};
    \node (abstar) at (20,0) {$(A^*,b^*)$};
    \end{tikzpicture}}
    \caption{Flow chart of our algorithm. The set $\mathcal S$ denotes the set of samples, $T$ is a fitted decision tree, and $\mathcal S^T$ is the set of samples calculated for the decisions tree.}
    \label{fig:flow_chart}
\end{figure}

Additionally, Subsection~\ref{sub:algs:cart} contains a heuristic based on an out-of-the-box method to train a decision tree on the $(A,b)$ sample set which does not exploit the observation regarding the feasible region of \eqref{eq:weak_CE} described in Section~\ref{sub:general:feas}. This approach serves as a benchmark in our computational experiments as it is expected to be fast to solve and easy to implement.

Note that the findings described in Section~\ref{sub:general:feas} allow us to build decision trees using two different types of queries. First, we can query whether a solution is feasible for a given point $(A,b)$, we refer to trees using this approach as \textit{matrix-based}. Second, we can query whether a solution satisfies only a single constraint $i \in [m]$. The latter case is referred to as \textit{row-based}. In Section~\ref{sub:algs:samx}, we describe only the approach for the matrix-based case. The procedures for the row-based method, which only requires minor changes to the algorithms and formulations, are described in Appendix~\ref{app:rowbased}.

\subsection{Sampling and Pre-Processing}\label{sub:algs:sampling}
Our algorithm first checks if the present parameters $(\hat A,\hat b)$ are a feasible CE by using \eqref{eq:check_if_CE}. If this is the case we can directly return $(\hat A,\hat b)$ as the optimal CE, since it has distance zero. Hence, in the following we assume that $(\hat A,\hat b)$ is not a feasible CE.

\paragraph{Sampling.}
In the first stage of our method, a dynamic sampling mechanism is applied which draws samples $\mathcal S= \{(A^i,b^i)\}_{i=1}^{N}$ around the parameters $(\hat A,\hat b)$ of the present problem. While sampling in the immediate neighborhood of $(\hat A, \hat b)$ seems tempting, since we are interested in finding the closest CE, we might only consider an area which contains no feasible solution for \eqref{eq:weak_CE}. In contrast, sampling in a larger area ensures proper exploration and increases the likelihood of successfully finding a CE, but also increases the risk that it will be far from the optimal solution of \eqref{eq:weak_CE}.

\begin{algorithm}[h!]
\begin{algorithmic}[1]
\State ce\_found $\gets$ False
\ForAll{$\delta\in \Delta$} \Comment{If no CE was found, restart sampling}
\State $\mathcal S \gets \emptyset$
\RepeatTimes{N} \Comment{Draw $N$ samples}
\State $(A,b) \gets$ sample from $D(\mathcal H,(\hat A, \hat b), \delta)$ \label{eq:sampling:sample}
\State $\mathcal S \gets \mathcal S \cup (A,b)$
\If{$(A,b) \in \mathcal W$} \label{eq:sampling:verify}
\State ce\_found $\gets$ True
\EndIf
\EndRepeat
\If{ce\_found} 
\State \Return $\mathcal S$
\EndIf
\EndFor
\end{algorithmic}
\caption{\label{alg:dyn_sampling}Dynamic sampling.}
\end{algorithm}

We tackle this problem by making use of Algorithm~\ref{alg:dyn_sampling} which iteratively increases the spread of the distributions used. Let $D(\mathcal H,(\hat A, \hat b),\delta)$ be a distribution supported on $\mathcal H$ which is centered around $(\hat A, \hat b)$ and its parametrization changes only by the single value $\delta$. We consider values $\delta \in \Delta := \{ \delta_1,\ldots ,\delta_K\}$ where we define $K$ as an upper limit on the number of iterations of the algorithm. As part of each iteration, we generate $N$ samples (Line~\ref{eq:sampling:sample}) from the current distribution $D(\mathcal H,(\hat A, \hat b),\delta)$. All samples are checked for being a feasible solution for \eqref{eq:weak_CE} (Line~\ref{eq:sampling:verify}) which can be done by checking \eqref{eq:check_if_CE}. If at least one sample is found to be a CE, the algorithm terminates after the respective iteration. Alternatively, if no CE was found, we consider the next value for $\delta$, and we sample again $N$ points in the next iteration using the new distribution. Only the samples found in the last iteration of the algorithm are returned, as an increasing number of samples increases the computational burden on subsequent steps of our method. In our experiments we use (i) uniform distributions where the support set is iteratively increased in each iteration, and (ii), truncated normal distribution where the variance is iteratively increased.

There are many possible adjustments or modifications to this sampling approach. These include, among others, reversing the order of distributions used, i.e. starting with a large spread which decreases over iterations. Depending on the specific underlying problem, $\mathcal H$, and domain knowledge of the decision maker, there might be known feasible solutions to \eqref{eq:weak_CE}. This allows the sample set $\mathcal S$ to be enriched by these data points.

\paragraph{Pre-Processing.} Our proposed pre-processing approach, denoted by \REpre, involves resampling around the CEs found during dynamic sampling described above. For each valid CE found, $N_{pre}$ points are drawn in its close neighborhood and each is labeled as feasible CE or infeasible CE as well by checking \eqref{eq:check_if_CE}. The approach is intended to enable a more detailed exploration of the region around CEs. Furthermore, in case a sampled CE lies close to a boundary of the feasible region, sampling in the close neighborhood can guide the decision tree to find the right splits. The samples generated in this pre-processing are added to the set of samples $\mathcal S$.

\subsection{Training}\label{sub:algs:samx}
In this section we describe a greedy heuristic to determine a decision tree classifying points $(A,b)$ based on whether they are feasible CEs or not, i.e., whether they are contained in $\mathcal W$ or not. Utilizing Theorem~\ref{thm:necessary_condition}, solution vectors $y\in \mathcal X$ of the underlying optimization problem are used as splits. Following the idea of the CART algorithm \cite{breiman1984classification}, the tree is built by choosing iteratively at each query node the best split w.r.t. to the gini impurity criterion. In contrast to the application of the CART algorithm, where a reasonable pool of splits can easily be precomputed, determining them in our application is challenging in itself since each solution in $\mathcal X$ provides a potential split and hence the number of splits might be exponential in the dimension of the present problem. Theoretically, the solution in $\mathcal X$ which minimizes the gini impurity can be calculated by optimizing over $\mathcal X$ with mixed-integer programming methods. In Appendix~\ref{sub:algs:miqp}, we present a non-linear mixed-integer programming formulation to compute splits which are locally optimal. As the formulation involves big-M constraints and a large number of binary variables, this procedure is expected to be too computationally expensive. 

Therefore, instead of finding the optimal split in each iteration of the training process, we select the best split from a pre-computed pool of solutions $X^{cand}\subset \mathcal X$. This procedure utilizes the fact that during the sampling process described in Section~\ref{sub:algs:sampling}, labeling each sample requires to solve $\min \{ \hat c^\top x: A^i x \ge b^i, x\in\mathcal X\}$ and $\min \{ \hat c^\top x: A^i x \ge b^i, x\in\mathcal X\cap \mathcal D\}$ and hence up to two different solution vectors are computed per oracle call which serve as candidates for the splits.

 We introduce the following notation: We define $d$ to be the maximum depth of the tree trained. Therefore, the final tree consists of at most $2^{d}-1$ nodes of which $Q = 2^{d-1}-1$ are potential queries, i.e. nodes representing a test.
 For the sake of simplicity, we make use of the following functions: Let \textit{leftchild(q)} and \textit{rightchild(q)} return the index of the left respective right child node for a given potential query node $q$. Furthermore, for a set of $(A,b)$ samples $\mathcal S$ we define

\begin{equation*}
  gini(\mathcal S) =
  \begin{cases}
    2 \frac{|\{(A,b) \in \mathcal S: (A,b) \in \mathcal W\}|}{|\mathcal S|}  \frac{|\{(A,b) \in \mathcal S: (A,b) \notin \mathcal W\}|}{|\mathcal S|} & \text{if $\mathcal S \neq \emptyset$} \\
    0 & \text{otherwise}
  \end{cases},
\end{equation*}
allowing us to compute the gini impurity of a split resulting in sets $\mathcal S^{\text{left}}$ and $\mathcal S^{\text{right}}$ by 

$$ginisplit(\mathcal S^{\text{left}},\mathcal S^{\text{right}}) = \frac{|\mathcal S^{\text{left}}|}{|\mathcal S^{\text{left}}|+|\mathcal S^{\text{right}}|}gini(\mathcal S^{\text{left}})+ \frac{|\mathcal S^{\text{right}}|}{|\mathcal S^{\text{left}}|+|\mathcal S^{\text{right}}|}gini(\mathcal S^{\text{right}}).$$

\begin{algorithm}[h!]
\begin{algorithmic}[1]
\Ensure samples $\mathcal S$, tree depth $d$, classification threshold $\beta\in (0,1)$ 
\State $\textcolor{black}{X^{cand} \gets \emptyset,} Q \gets 2^{d-1}-1, \mathcal S^1 \gets\mathcal S, \mathcal L \gets \emptyset$
\ForAll{$(A,b) \in \mathcal S$} \Comment{Generate x candidates}
\State $x^1 \gets \argmin \{ \hat c^\top x: A x \ge b, x\in\mathcal X\}$ \label{eq:samx_matrix:oc1}
\State $x^2 \gets \argmin \{ \hat c^\top x: A x \ge b, x\in\mathcal X\cap \mathcal D\}$ \label{eq:samx_matrix:oc2}
\State $X^{cand} \gets X^{cand} \cup \{x^1,x^2\}$
\EndFor
\ForAll{$q \in [Q]$} \Comment{Build tree iteratively} \label{eq:samx_matrix:iter_q}
    \If{$\mathcal S^{q} \neq \emptyset$}
    \State $\mathcal S^{\text{left}} \gets \emptyset, \mathcal S^{\text{right}} \gets \emptyset, g^*\gets0.5, \bar x^q \gets \text{None},\mathcal S^{\text{leftchild(q)}}\gets \emptyset,\mathcal S^{\text{rightchild(q)}}\gets \emptyset$
        \ForAll{$x \in X^{cand}$} \Comment{Determine best query}\label{eq:samx_matrix:iter_x}
            \ForAll{$(A,b) \in \mathcal S^{q}$}
            \If{$Ax \geq b$}
                \State $ \mathcal S^{\text{left}} \gets \mathcal S^{\text{left}}\cup \{(A,b)\}$
            \Else 
                \State $ \mathcal S^{\text{right}} \gets \mathcal S^{\text{right}} \cup \{(A,b)\}$
            \EndIf
            \EndFor
        \If{$\text{ginisplit}(\mathcal S^{\text{left}},\mathcal S^{\text{right}}) < \min\{\text{gini}\text(\mathcal S^{q}),g^*\}$} \label{eq:samx_matrix:newbestsplit}
            \State $\bar x^q \gets x,\mathcal S^{\text{leftchild(q)}}\gets \mathcal S^{\text{left}}, \mathcal S^{\text{rightchild(q)}}\gets \mathcal S^{\text{right}}$ 
            \State $g^* \gets \text{ginisplit}(\mathcal S^{\text{left}}, \mathcal S^{\text{right}})$
        \EndIf
        \EndFor
    \If{$S^{\text{leftchild}(q)} = \emptyset$ \textbf} \Comment{Identify leaf nodes}
        \State $\mathcal L \gets \mathcal L \cup\{q\}$
    \ElsIf{$\text{leftchild}(q) \in \{2^{d-1}+1, ...,2^d-1\}$}
    \State $\mathcal L \gets \mathcal L \cup\{\text{leftchild}(q), \text{rightchild}(q)\}$
    \EndIf
    \EndIf
\EndFor
\State label leaves $\ell \in \mathcal L$ as ``feasible'' if $\frac{|\{(A,b)\in \mathcal S^\ell:(A,b)\in \mathcal W\}|}{|\mathcal S^{\ell}|} \geq \beta$; else ``infeasible'' \label{eq:samx_matrix:labeltree}
\State \Return classification tree on $\mathcal H$ with splits $\bar x^q \ \forall q \in [Q]$ 
\end{algorithmic}
\caption{Greedy algorithm for training the decision tree (matrix based).} 
\label{alg:sample_x}
\end{algorithm}

The training method is presented in Algorithm \ref{alg:sample_x}. First, all sampled points are evaluated for being a CE~(Lines~\ref{eq:samx_matrix:oc1}-\ref{eq:samx_matrix:oc2}) and the computed solutions are stored as split candidates. If used in combination with our dynamic sampling procedure, this part can be incorporated in Algorithm~\ref{alg:dyn_sampling}, Line~\ref{eq:sampling:verify} to avoid repeated computation. For each potential split $q \in [Q]$ (Line~\ref{eq:samx_matrix:iter_q}) we iterate over the solution candidates (Line~\ref{eq:samx_matrix:iter_x}) dividing the set $\mathcal S^q$ into feasible ($\mathcal S^{left}$) and infeasible ($\mathcal S^{right}$) subsets of $(A,b)$ samples. Based on these two sets, the gini impurity for each split is calculated. The solution vector which results in the highest decrease in gini impurity is chosen as split value (Line~\ref{eq:samx_matrix:newbestsplit}). If none of the candidates leads to an improvement, the respective node is regarded as leaf node.
A leaf $i$ is labeled ``feasible'' if more than a fraction of $\beta$ of the samples assigned to it are CEs. Otherwise, a leaf is labeled as ``infeasible'' (Line~\ref{eq:samx_matrix:labeltree}). The algorithm returns the computed decision tree. As described in Section~\ref{sub:general:tree}, CEs can then be computed for all leaves of the tree labeled as ``feasible'' by solving Problem~\eqref{eq:find_ce_for_tree}. In the following, we refer to the set of CEs computed for a decision tree as $\mathcal S^T$. 

\subsection{Post-processing}\label{sub:algs:pp}
Our proposed post-processing approach involves two methods. The first method is a resampling procedure which tries to reduce the numerical instabilities resulting from Problem \eqref{eq:find_ce_for_tree} used to solve the CEs. The second applies several simple line search ideas to find CEs which are even closer to the present parameters $(\hat A,\hat b)$ than the calculated CEs.

\paragraph{Resampling procedure.} Preliminary computational experiments showed that due to the openess of the feasible region of \eqref{eq:weak_CE} numerical instabilities might occur. Especially, when solving \eqref{eq:find_ce_for_tree} to obtain CEs for our trained decision tree, the involved parameter $\varepsilon$ which models strict violation of one of the constraints is hard to tune. This results in CEs returned by Problem~\eqref{eq:find_ce_for_tree} which cannot be identified as CEs afterwards when performing the check \eqref{eq:check_if_CE}. The reason is that these point lie close to the boundary of the feasible region which might be open. Our first post-processing method, which we denote as (\REpost), is applied after the decision tree has been trained. $N_{post}$ new samples are drawn from the neighborhood of each CE that was computed for the decision tree (which are not necessarily feasible solutions for (\ref{eq:weak_CE})). The generated samples are evaluated applying \eqref{eq:check_if_CE} and added to the set of samples $\mathcal S^T$. 

\paragraph{Line search.} The second post-processing method contains three improvement steps based on line search approaches, which are supposed to be applied after the decision tree has been trained and have the goal to find CEs which are even closer to the present parameters $(\hat A,\hat b)$. All three methods take as input a set of samples $\mathcal S^{T}$, which were found by computing CEs for the decision tree, including points found by \REpost, if applicable. While Theorem~\ref{thm:necessary_condition} ensures that there must be a hyperplane, defined by a solution vector, that defines the boundary of the feasible region of (\ref{eq:weak_CE}), the heuristic nature of our approach does not guarantee that a solution we find actually lies on this boundary. The methods presented here therefore aim to identify such a point or to get closer to it.

\begin{figure}[h!tb]
\centering
\begin{subfigure}{.32\textwidth}
        \centering
        \includegraphics[width=\linewidth]{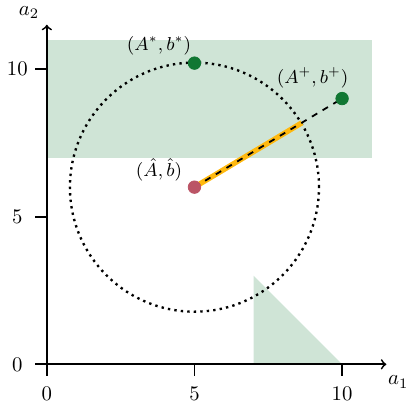}
    \caption{\LSone.}
    \label{fig:ls_pos}
\end{subfigure}
\begin{subfigure}{.32\textwidth}
        \centering
        \includegraphics[width=\linewidth]{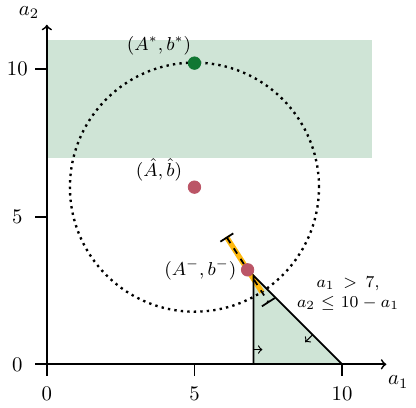}
    \caption{\LStwo.}
    \label{fig:ls_neg1}
\end{subfigure}
\begin{subfigure}{.32\textwidth}
        \centering
        \includegraphics[width=\linewidth]{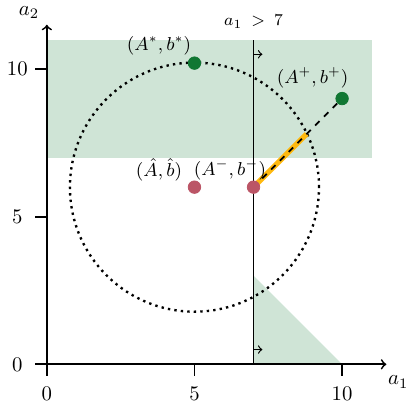}
    \caption{\LSthree.}
    \label{fig:ls_neg2}
\end{subfigure}
\caption{Visualization of the three types of line search applications in $\mathcal H$. The point $(A^*,b^*)$ is the best-known solution and the dotted circle indicates the $L_2$-ball with radius $d^*=\| (A^*,b^*) - (\hat A, \hat b)\|_2$ centered around $(\hat A, \hat b)$. $\mathcal W$ is colored in green. The dashed line represents the line considered, while the part highlighted in orange represents the part of the line which potential candidates for better CEs. }
\label{fig:sketch_ls}
\end{figure}

Figure~\ref{fig:sketch_ls} sketches the intuition of these methods for an example problem with parameters $a_1, a_2$ being mutable in $\mathbb R^2$. Figure~\ref{fig:ls_pos} presents the first approach, referred to as \LSone. We conduct a line search between a feasible CE $(A^+,b^+)$ and the original instance $(\hat A, \hat b)$, indicated by the dashed line, as we know that at least one boundary of the feasible region has to lie between these two points (since otherwise $(\hat A, \hat b)$ is feasible and hence an optimal CE which can be detected before starting our procedure). By shifting $(A^+,b^+)$ towards $(\hat A, \hat b)$ while ensuring to stay within $\mathcal W$, we aim to minimize the distance to this boundary. If $(A^+,b^+)$ lies in the interior of the feasible region this procedure will always result in a closer CE.

The second method (\LStwo) depicted in Figure~\ref{fig:ls_neg1}, addresses numerical issues and rounding errors which may lead to a computed point $(A^-,b^-)$, which is found to be infeasible for (\ref{eq:weak_CE}), being slightly next to a boundary of the feasible region. For each sample $(A^-,b^-)$ not being a CE, we search on the line given by $\lambda (\hat A,\hat b)+(1-\lambda)(A^-, b^-)$ for $\lambda \in [-\alpha, \alpha]$ and some value $\alpha >0$. This corresponds to the intuitive idea of moving the point $(A^-,b^-)$ slightly farther away from or slightly closer to the point $(\hat A, \hat b)$.

For the last method (\LSthree), presented in Figure~\ref{fig:ls_neg2}, we exploit the fact that for a given leaf of the decision tree which was labeled as feasible and for every infeasible point $(A^-,b^-)$ assigned to this leaf at least one border of the feasible region of (\ref{eq:weak_CE}) has to lie between the infeasible point $(A^-,b^-)$ and each feasible point also contained in the leaf. For an infeasible point $(A^-,b^-)$ we therefore determine the leaf to which it is assigned by using our decision tree. We then conduct line search between the point $(A^-,b^-)$ and the best valid CE $(A^+,b^+)$ assigned to the respective leaf.

Algorithm~\ref{alg:line_search} describes formally how to incorporate the three line search methods. We deliberately use the general term line search here as a wide variety of methods can be applied. Possible approaches include simple algorithms such as linear or binary search, or more sophisticated methods such as Bayesian Optimization. We assume that all points considered during the search procedure are evaluated w.r.t. to their property of being a CE. All of these points which satisfy this property are returned in a set with index $+$, all points which are not CEs are returned in a set with index $-$. The latter are used partly for \LSthreesp and are otherwise listed solely for the sake of consistency.
For an illustration of the application using the example of binary search, please refer to Appendix~\ref{app:params}. 

The formulation of Algorithm~\ref{alg:line_search} allows only to conduct a subset of methods presented previously. Note that if \LSthreesp is to be used, but not \LStwo, $\mathcal S_-^{\prime \prime}$ should be replaced by $\mathcal S^T$ in Line~\ref{ls:set}. Furthermore, a limit on the number of iterations in Line~\ref{ls:for1} and/or Line~\ref{ls:for2} might be applied to reduce the run time. 

As we are interested in improving the solution quality measured by the distance between the best feasible solution and $(\hat A, \hat b)$, for each of the three methods we only need to consider the part of the line which is actually closer to $(\hat A, \hat b)$ than the best feasible solution found so far. We indicate the best CE found so far by $(A^*,b^*)$ and define $d^*$ to be $||(A^*,b^*)-(\hat A,\hat b)||$. In all plots, we highlight the $L_2$-ball of radius $d^*$ centered at $(\hat A, \hat b)$ in gray. Note that for methods \LStwosp and \LSthree, it is possible that no part of the line lies within this ball. In this case, no line search is conducted. 
To determine the lower boundary of the respective line segment which is closer to $(\hat A, \hat b)$ than $d^*$ for \LSonesp the formula \begin{equation}\lambda_{lb} = 1- \frac{d^*}{||(A^+,b^+)-(\hat A, \hat b)||}\label{eq::lambda_ls1}\end{equation} is used. 
For \LStwosp and some value $\alpha >0$ we first verify that a line segment within a distance of at most $d^*$ to $(\hat A, \hat b)$ exists, which is true if $1- \frac{d^*}{||(A^-,b^-)-(\hat A, \hat b)||} < \alpha$ holds. In this case, we set $\lambda_{ub} = \alpha$ and compute \begin{equation}\lambda_{lb}=\max\{-\alpha,1-\frac{d^*}{||(A^{-},b^{-})-(\hat A, \hat b)||}\}.\label{eq::lambda_ls2}\end{equation} Otherwise, no line search is executed.
The precalculation of the boundaries for \LSthreesp (Algorithm~\ref{alg:line_search}, Line~\ref{ls:lam_lsn2}) is done by solving $\lambda_{lb}=\argmin \{ \lambda:\lambda \in \mathcal Z \} $ and $\lambda_{ub}=\argmax \{ \lambda:\lambda \in \mathcal Z \} $ for
\begin{align*}
        \mathcal Z = \{ \lambda \in [0,1]: \ & \lambda (A^-,b^-) + (1-\lambda) (A^+,b^+) = (A^\prime,b^\prime), &\\
        & ||(A^\prime,b^\prime)-(\hat A,\hat b)|| \leq d^*,\\ 
        & (A^\prime,b^\prime) \in \mathcal{H}\}.
\end{align*}
If $\mathcal Z = \emptyset$, \LSthreesp is carried out. 

\begin{algorithm}[h]
\begin{algorithmic}[1]
\Ensure samples $\mathcal S, \mathcal S^T$, 
\State $(A^*,b^*) \gets \argmin_{(A,b)\in (\mathcal S \cup S^T)\cap \mathcal W} ||(A,b)-(\hat A, \hat b)||, d^*\gets ||(A^*,b^*)-(\hat A, \hat b)||$, $\mathcal S^\prime_{+/-} \gets \emptyset, \mathcal S^{\prime\prime}_{+/-}\gets \emptyset, \mathcal S^{\prime\prime\prime}_{+/-} \gets \emptyset$ \ForAll{$(A,b) \in \mathcal S^T$} \label{ls:for1}
\If{$(A,b) \in \mathcal W$}
\State compute $\lambda_{lb}$ using Formula (\ref{eq::lambda_ls1})
\State $\mathcal S^\prime_+,\mathcal S^\prime_- \gets$ \textbf{do} line search on $\lambda (\hat A, \hat b)+(1-\lambda)(A,b):\lambda \in [\lambda_{lb},1]$\Comment{Apply \LSone}\label{ls:lsp}
\State update $(A^*,b^*)$, $d^*$
\Else
\State compute $\lambda_{lb}, \lambda_{ub}$ w.r.t. $\alpha$ using Formula (\ref{eq::lambda_ls2})
\State $\mathcal S^{\prime\prime}_+,\mathcal S^{\prime\prime}_-  \gets$ {do} line search on $\lambda (\hat A, \hat b)+(1-\lambda)(A,b):\lambda \in [\lambda_{lb},\lambda_{ub}]$ \Comment{Apply \LStwo}\label{ls:lsn1}
\State update $(A^*,b^*)$, $d^*$
\EndIf
\EndFor
\ForAll{$(A,b) \in \mathcal S^{\prime\prime}_-$} \label{ls:for2}\label{ls:set}
\State $(A^+,b^+)\gets$ best CE assigned to the same leaf as $(A,b)$
\State compute $\lambda_{lb}, \lambda_{ub}$ by solving $\min_{\lambda \in \mathcal Z}$ and $\max_{\lambda \in \mathcal Z}$ \label{ls:lam_lsn2}
\State $\mathcal S^{\prime\prime\prime}_+,\mathcal S^{\prime\prime\prime}_- \gets$ {do} line search on $\lambda (A,b)+(1-\lambda)(A^+, b^+):\lambda \in [\lambda_{lb},\lambda_{ub}]$ \Comment{Apply \LSthree}\label{ls:lsn2}
\State update $(A^*,b^*)$, $d^*$
\EndFor
\State $\mathcal S \gets \mathcal S \cup \mathcal S^T \cup \mathcal S^\prime_+ \cup \mathcal S^{\prime\prime}_+\cup \mathcal S^{\prime\prime\prime}_+$
\State \Return $\mathcal S$
\end{algorithmic}
\caption{\label{alg:line_search}Post-processing: Line search approaches.}
\end{algorithm}

\subsection{Classical CART Algorithm (Benchmark)}\label{sub:algs:cart}
This section describes an algorithm that will serve as a benchmark for the other methods. It is mainly based on the CART algorithm \cite{breiman1984classification}, which is widely available as an out-of-the-box implementation (e.g. in \cite{scikit-learn}). We refer to it as \cart when used in combination with our pre- and post-processing methods, or \cartpuresp if used standalone. 

The method is illustrated in Algorithm~\ref{alg:cart}. As with all methods presented in this paper, we utilize a set of $(A,b)$ samples $\mathcal S$ generated by our dynamic sampling approach. First, we iterate over all samples, creating a set of labels $\mathcal C$, indicating if a point $(A,b)$ is a CE or not (Line~\ref{cart:label}). Next, a classification tree using the CART algorithm and the sets $\mathcal S$ and $\mathcal C$ as training data is constructed (Line~\ref{cart:train}). Potential feasible solutions are obtained by calculating CEs for every leaf of the tree which contains more than a fraction of $\beta$ points which are labeled as CE (Line~\ref{cart:obtain}).

\begin{algorithm}
\begin{algorithmic}[1]
\State $\mathcal C \gets \emptyset, \mathcal T \gets \text{None}$ 
\ForAll{$(A^i,b^i)\in \mathcal S$}
\State $\mathcal{C}(i) \gets \text{``feasible'' if } (A,b) \in \mathcal W, \text{``infeasible'' else}$ 
\label{cart:label}
\EndFor
\State train classification decision tree $\mathcal T$ on $(\mathcal{S},\mathcal{C})$\label{cart:train} with maximum depth $d$
\State $\mathcal S^T \gets$ calculate CEs for all leaves of $\mathcal T$ which contain at least a fraction of $\beta$ samples labeled ``feasible''\label{cart:obtain}
\State \textbf{return} $\argmin \{ ||(A,b)-(\hat A, \hat  b)||:(A,b)\in \mathcal S^T\cap\mathcal W\}$
\end{algorithmic}
\caption{\label{alg:cart} \cart.}
\end{algorithm}

\section{Computational Experiments}

\subsection{Setup}
We conduct experiments for two types optimization problems: Knapsack and the Capacitated Facility Location Problem (CFLP). Whilst the former is mainly used to benchmark against the existing exact method developed in \cite{engelhardt2025counterfactual}, the latter is utilized to analyze the performance on mixed-integer problems\textcolor{black}{. Since problems containing continuous variables are not covered by the theory described in Section~\ref{sec:general}, we will empirically verify to what extent our methods work beyond that limit.} All experiments are based on underlying established instances from the literature. We divide our experiments in four parts, serving the following objectives:
\begin{itemize}
    \item[\ref{sub:knapsack}] Experiments using Knapsack-based instances:
    \begin{itemize}
    \item[\ref{sub:knapsack:sampling}] Determining suitable hyper-parameters for the dynamic sampling.
    \item[\ref{sub:knapsack:algs}] Comparing our approaches to the exact method in \cite{engelhardt2025counterfactual}.
    \item[\ref{sub:knapsack:ablation}] Ablation study to analyze the impact of our pre- and post-processing steps and the tree depth.
    \end{itemize}
    \item[\ref{sub:cflp}] Experiments using CFLP-based instances: Evaluating our method on larger instances of a mixed-integer program, comparing the row- and the matrix-based version.
\end{itemize}

\paragraph{Knapsack-based instances.}
For the knapsack experiments we make use of a set of instances formerly used by \cite{engelhardt2025counterfactual}. These are modified instances of \texttt{kplib}\footnote{\url{https://github.com/likr/kplib}.}, which in turn were created using a method proposed in \cite{kellerer2004exact}. All instances are treated as cover problems in the form of the following MIP formulation:

\begin{align*}
    \min \ & \sum_{i \in [n]} c_i x_i \\
    \text{s.t.} \ & \sum_{i \in [n]} w_i x_i \geq B\\
    & x_i \in \{ 0,1 \} & \forall i \in [n]
\end{align*}

\textcolor{black}{These instances make use of three different types of favored solution spaces: $\mathcal D^+= \{x \in \{0,1 \}^n:x_i=1 \ \ \forall i \in \mathcal I^+ \}$, $\mathcal D^-= \{x \in \{0,1 \}^n:x_i=0 \ \ \forall i \in \mathcal I^- \}$ and $\mathcal D^\geq = \{x \in \{0,1 \}^n:\sum_{i \in \mathcal I^\geq}x_i \geq 1\}$ with $\mathcal I^-$, $\mathcal I^+$, and $\mathcal I^\geq$ being subsets of $[n]$. The weights and the budget are considered mutable within an interval of $\pm5\%$ of the respective original values.} 
In contrast to \cite{engelhardt2025counterfactual}, \textcolor{black}{which only allow the selection of integer values within these intervals}, we also consider fractional values.
\textcolor{black}{Note that this difference impacts the comparability of all experimental results; however, this is the closest possible comparison from all available methods of the state-of-the-art literature.} Suitable instances were identified by filtering out all instances for which the origin instance already has an optimal solution in the favored solution space and all instances for which \cite{engelhardt2025counterfactual} could prove the non-existence of a solution within the mutable parameter space. In total 159 instances were used in the presented experiments, including ten to 40 items each. Not all of them could be solved to certified optimality and for 21 of these, the existence of a feasible solution is still unknown. We always compare to the best-known CE found by \textcolor{black}{\cite{engelhardt2025counterfactual}}.
When referring to \textcolor{black}{those} results we make also use of the term \exact.

\paragraph{CFLP-based instances.}
The CFLP experiments are based on 25 instances taken from the \texttt{ORLIB} library\footnote{\url{https://people.brunel.ac.uk/~mastjjb/jeb/orlib/capinfo.html}} \cite{beasley1988algorithm}. All instances consist of 50 customers, nine include 16, eight include 25 and further eight include 50 facilities. We consider the CFLP problems in the form of the following MIP formulation:
    \begin{align*}
        \min \ & \sum_{i \in \mathcal I} \sum_{j \in \mathcal J} c^{var}_{ij} y_{ij} + \sum_{i \in \mathcal I} f_i x_{i} \\
        \text{s.t.} \ & \sum_{i \in \mathcal I} y_{ij} \geq d_{j} & \forall j \in \mathcal J\\
        & \sum_{j \in \mathcal J} - y_{ij} + C_{i} x_i\geq 0 & \forall i \in I \\
        & y_{ij} \geq 0 &\forall i \in \mathcal I, j \in \mathcal J \\
        & x_i \in \{0,1 \} & \forall i \in \mathcal I
    \end{align*}
Two types of favored solution spaces are considered: $\mathcal{D}^+ = \{ x \in \{0,1\}^{I}: x_i = 1 \ \ \forall i \in \mathcal{I} ^+\}$ and $\mathcal{D}^- = \{ x \in \{0,1\}^{I}: x_i = 0 \ \ \forall i \in \mathcal{I} ^-\}$.
For each of the 25 underlying instances two instances using a favored solution space of type $\mathcal{D^+}$ and two instances using a favored solution space $\mathcal{D^-}$ were derived. Each favored solution space is based on a set $\mathcal{I}^+$ respective $\mathcal{I}^-$ containing a single, randomly selected facility $i \in \mathcal I$, i.e., in each instance the opening or closing of a random facility is demanded. During sampling, only instances were considered for which none of the optimal solutions lie within the favored solution space.
For the mutable parameter space we allow the demands and capacities to be changed by $\pm 100\%$ each. 
To the best of our knowledge, there is currently no other approach which is able to solve \eqref{eq:weak_CE} for these instances which could serve as a benchmark.

\paragraph{Computational Setup.}
The experiments were implemented in Python 3.12.3. Mathematical programming formulations were solved using Gurobi 13.0.1 \cite{gurobi}. For the generation of decision trees as part of the  benchmark method \cartsp we made use of the CART implementation in the library \textit{scikit-learn} \cite{scikit-learn}, version 1.8.0. All computational experiments were executed on a machine with 96 cores running at 3.6 GHz and 256 GB memory. Up to 100 instances were solved in parallel with Gurobi's threads parameter set to one. When solving versions of the underlying problem where the solution is enforced to be in $\mathcal H$, we set Gurobi's internal time limit to one second. For solving Problems (\ref{eq:find_ce_for_tree}), (\ref{eq:find_ce_for_tree_row}), (\ref{eq:gini1}), and (\ref{eq:gini_row}), a time limit of 300 seconds is used.
The code and the considered instances are available on GitHub\footnote{\url{https://github.com/sbstnmrtn/heuristics_counterfactuals_optimization}}. For all experiments, binary search was used as the line search method. Further values of parameters which are not subject to changes during the experiments are reported in Appendix~\ref{app:params}. In addition to the introduction of abbreviations for the methods used when they are first described, a corresponding overview can also be found in Table~\ref{tab:abbreviations}.

\begin{table}[h!]
    \centering
    \begin{tabular}{ll}
        Abbreviation& Description and reference\\
        \hline
        \exact & Exact method; \cite{engelhardt2025counterfactual} \\
        \samx & Training using $X^{cand}$; Section \ref{sub:algs:samx} \\
        \miqp & Training solving MIQPs; Appendix \ref{sub:algs:miqp} \\
        \cart & Training using CART with pre- and post-processing; Section \ref{sub:algs:cart}\\
        \cartpure & Training using CART without pre- and post-processing; Section \ref{sub:algs:cart}\\
        \texttt{only sampling} & Determining CEs by sampling; Section \ref{sub:algs:sampling} \\
        \LSone, \LStwo, \LSthree & Line search approaches; Section \ref{sub:algs:pp} \\
        \REpre, \REpost & Resampling steps; Sections \ref{sub:algs:sampling} \& \ref{sub:algs:pp}
    \end{tabular}
    \caption{List of algorithms evaluated.}
    \label{tab:abbreviations}
\end{table}

To determine whether a solution $(A,b)$ is a weak CE for an underlying problem, two problems must be solved: first, the underlying problem using these parameters and second, a version of this problem where the solution is enforced to lie within the favorable solution space $\mathcal H$. If the optimal objective value of both problems is equal, the point $(A,b)$ indeed represents a weak CE. Due to the limitations of computational precision, we allow for a tolerance within this definition of $2\cdot 10^{-4}$, i.e. two times Gurobi's default MIP gap. Note that suboptimal solutions for the former problem may result in false positives, while suboptimal solutions for the latter might lead to false negatives. As (WCEP) has been proven to be $\Sigma^p_2$-hard, evaluating a point $(A,b)$ is already NP-hard in general. We therefore report for a selection of the experiments the number of so-called oracle calls. Verifying one $(A,b)$ point as described is counted as one oracle call.

The distance between a weak CE $(A,b)$ and the original instance $(\hat A, \hat b)$ of the underlying optimization problem was measured using the sum of absolute differences (entrywise $L_1$-norm). For all algorithms evaluated, the minimal distance between the closest CE found and the original instance is reported as objective value.

\subsection{Knapsack}\label{sub:knapsack}
\subsubsection{Sample Size and Distribution}\label{sub:knapsack:sampling}
In this section, the impact of the number of samples drawn in each iteration of the dynamic sampling ($N$) is analyzed. For this purpose we use the dynamic sampling step described in Section~\ref{sub:algs:sampling} as a standalone process and measure the share of instances for which a feasible solution for \eqref{eq:weak_CE} was found, as well as the runtime. Since all further steps of the proposed methods depend on the existence of a feasible solution for \eqref{eq:weak_CE}, this step is a crucial part of the proposed algorithms. The results of this experiment therefore support the selection of appropriate parameters for further testing. For each of the 159 Knapsack instances ten runs using different seeds were executed and the results were averaged. The sampling was conducted using uniform (\texttt{ud}) and truncated normal distributions (\texttt{tnd}). The specific parameters of the distributions used are reported in Appendix~\ref{app:params}.

\begin{figure}[h!tb]
    \begin{subfigure}[t]{0.49\textwidth}
         \centering
         \includegraphics[width=1\linewidth]{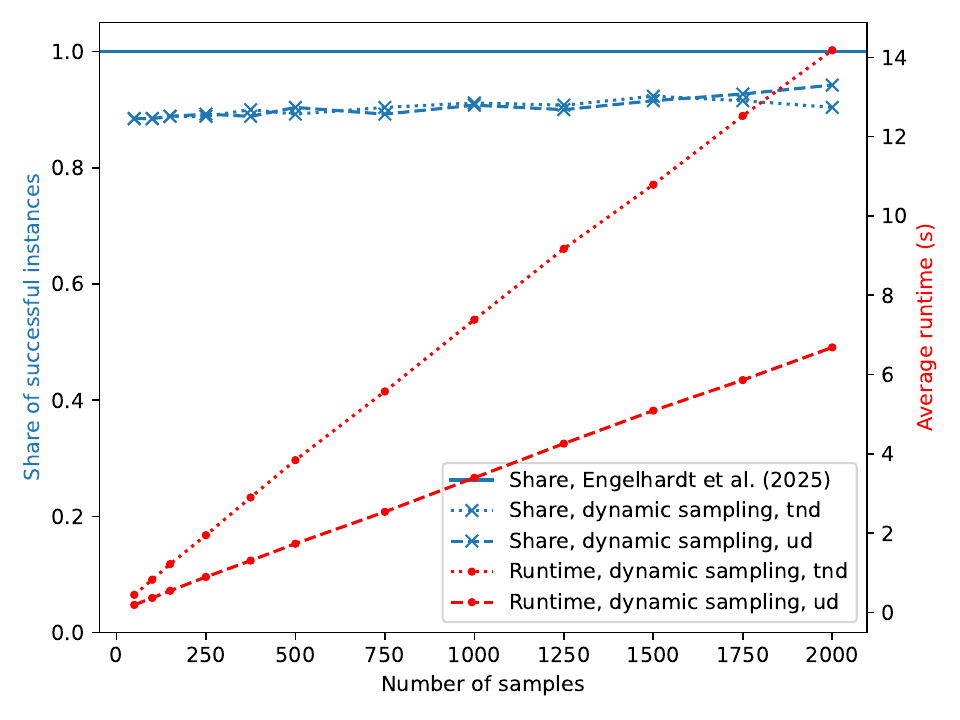}
         \caption{10 items, 26 instances.}
         \label{fig:sample_size:10}
    \end{subfigure}
    \begin{subfigure}[t]{0.49\textwidth}
         \centering
         \includegraphics[width=1\linewidth]{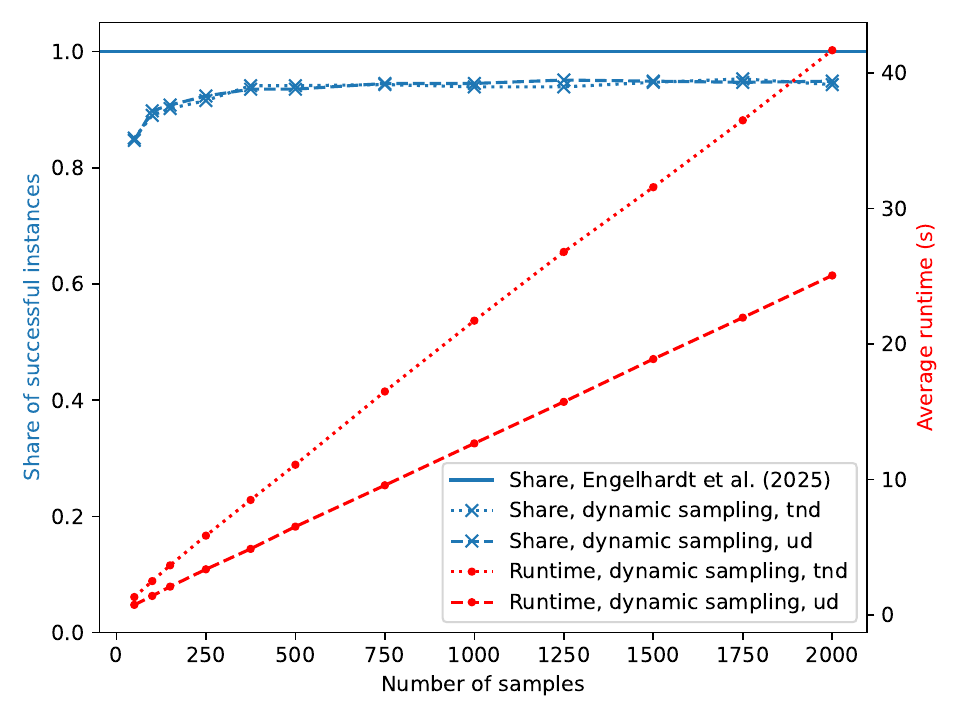}
         \caption{20 items, 51 instances.}
         \label{fig:sample_size:20}
    \end{subfigure}
    \begin{subfigure}[t]{0.49\textwidth}
         \centering
         \includegraphics[width=1\linewidth]{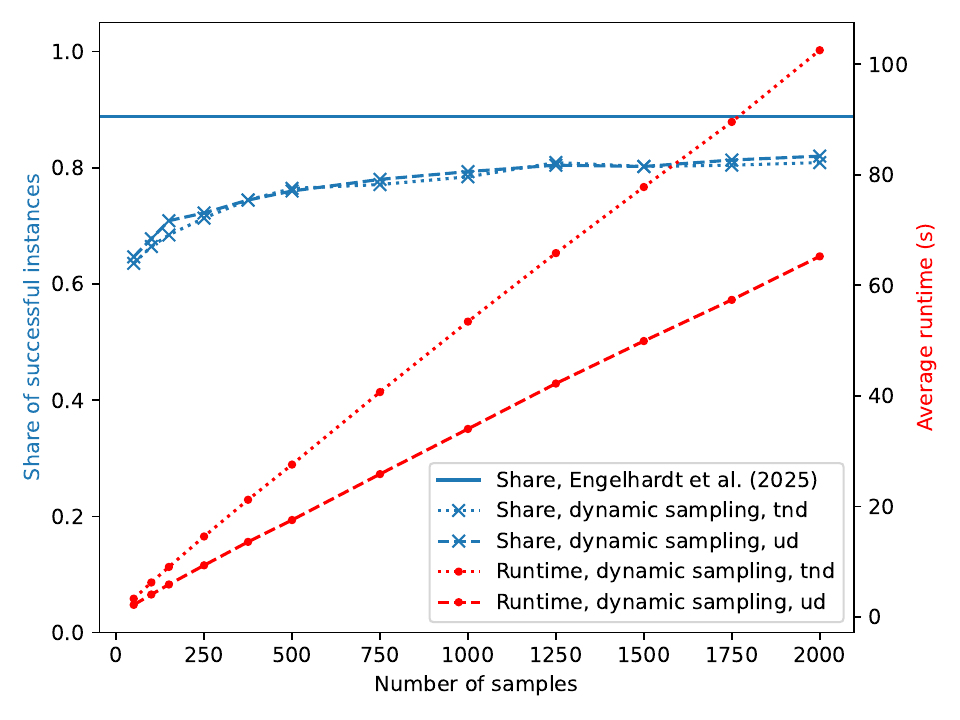}
         \caption{30 items, 45 instances.}
         \label{fig:sample_size:30}
    \end{subfigure}
    \begin{subfigure}[t]{0.49\textwidth}
         \centering
         \includegraphics[width=1\linewidth]{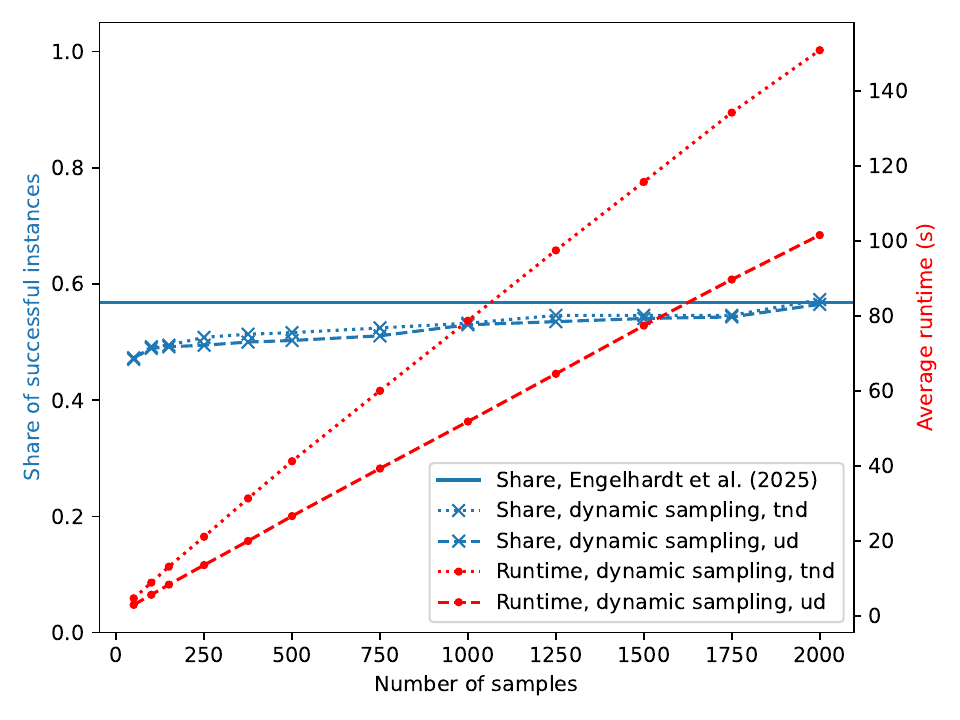}
         \caption{40 items, 37 instances.}
         \label{fig:sample_size:40}
    \end{subfigure}
    \caption{Sample size vs. share of instances for which a weak CE was found and average runtime. In total, all 159 instances are represented.    \label{fig:corr}}
\end{figure}

Figures \ref{fig:sample_size:10}-\ref{fig:sample_size:40} depict the share of instances for which a weak CE was found, and the corresponding average runtime. The solid line indicates the share of instances for which a weak CE was found by \exact. Represented values $<1$ indicate the use of instances for which it is currently unknown if weak CEs within the mutable parameter space exist. The share of instances for which a CE was found by using a uniform distribution and a truncated normal distribution is shown by dashed and dotted lines, respectively. Average runtimes in seconds are reported for the two sampling-based approaches.

It can be seen that there is a positive correlation between the number of samples and the share of CEs found. For all four groups of instances, this relationship holds. Furthermore, larger instances seem to demand a larger sample set. For example, while a sample size of 50 is already sufficient to find weak CEs for 88.6\% of the instances consisting of 10 items, for instances with 30 items a sample size of 750 is necessary to ensure the same share of instances for which the existence of a CE is known. 

The results indicate that both distribution types result in approximately the same share of instances for which a CE was found. However, the use of truncated normal distributions requires up to twice the runtime of the uniform distributions. This indicates the execution of more iterations during the process. Furthermore, there is a clear positive correlation between the instance size, indicated by the number of knapsack items, and the runtime.
For all instances and the maximum sample size of 2,000, dynamic sampling could reach significantly more than 90\% of the success rate of \exact. For the largest instances with 40 items, uniform distributions and a sample size of 2,000, dynamic sampling led to 2.4 CEs on average which were not found by \exact.

The results underline that the dynamic sampling approach, especially when using a uniform distribution, has a success rate for finding CEs which is comparable to that of the exact algorithm. It was also shown that the number of samples drawn ($N$) is a critical parameter and, particularly given the method's relevance for subsequent steps, should be chosen prudently.

\subsubsection{Comparison of the Approaches}\label{sub:knapsack:algs}
In this section we compare the performance of all presented algorithms with the results found by \cite{engelhardt2025counterfactual}. Based on the results presented in the previous section, we make use of a uniform distribution and a sample size given by the number of items times 50. For the methods \samx, \miqp, and \cartsp all pre- and post-processing steps described in Section~\ref{sub:algs:pp} were used. We also use \cartsp without any pre- or post-processing (\cartpure). As a further benchmark, the results for standalone dynamic sampling, which is utilized in all other algorithms, are presented (\texttt{only sampling}).

\begin{table}[h!]
    \centering
    \begin{tabular}{llll}
        Algorithm & Objective value &Runtime (s) & Oracle calls \\
        \hline
        \exact & 36.00             & \phantom{0}989.44              & n/a \\
        \samx  & 37.00 (+2.78\%)   & \phantom{00}21.06 (-97.87\%)    & 2445 \\
        \miqp  & 46.00 (+27.78\%)  & 1638.61 (+65.61\%)  & 2453 \\
        \cart  & 43.02 (+19.51\%)  & \phantom{00}29.89 (-96.98\%)    & 2712 \\
        \cartpure & 57.34 (+59.27\%) & \phantom{000}9.91 (-99.00\%)    & 1522 \\
        \texttt{only sampling} & 77.10 (+114.17\%) & \phantom{000}9.89 (-99.00\%)     & 1501
    \end{tabular}
    \caption{Comparison of all methods used. For each method the median values are presented. Represented: 125/159 instances. Not represented: for 13/159 instances no CE was found by the heuristics but by \exact; for 2/159 instances a CE was found by the heuristics but not by \exact; for 19/159 instances the existence of a CE remains unknown.}
    \label{tab:all_methods_ks}
\end{table}

In Table~\ref{tab:all_methods_ks} the median objective value, runtime and oracle calls are presented. It can be observed that \miqpsp has exceptionally large runtimes, which even exceed those of \exact. Since the calculated CEs are also worse in terms of objective value, this method offers no viable alternative to the exact method. In contrast, both CART-based methods can be executed very fast \cart~benefits from the pre- and post-processing steps. Even \cartpuresp offers a significant improvement in solution quality over \texttt{only sampling} by only requiring marginally more runtime and oracle calls. Apparently, \samxsp results in CEs which are less than 3\% worse than the ones found by \exact. This is particularly noteworthy given that \samxsp is about 50 times faster. Furthermore, this method offers a significant improvement over the starting solutions found by sampling. Overall, this method offers a trade-off between quality of the explanations and runtime which is likely the most favorable for most applications.

\begin{figure}[h!tb]
    \begin{subfigure}[t]{0.49\textwidth}
         \centering
         \includegraphics[width=1\linewidth]{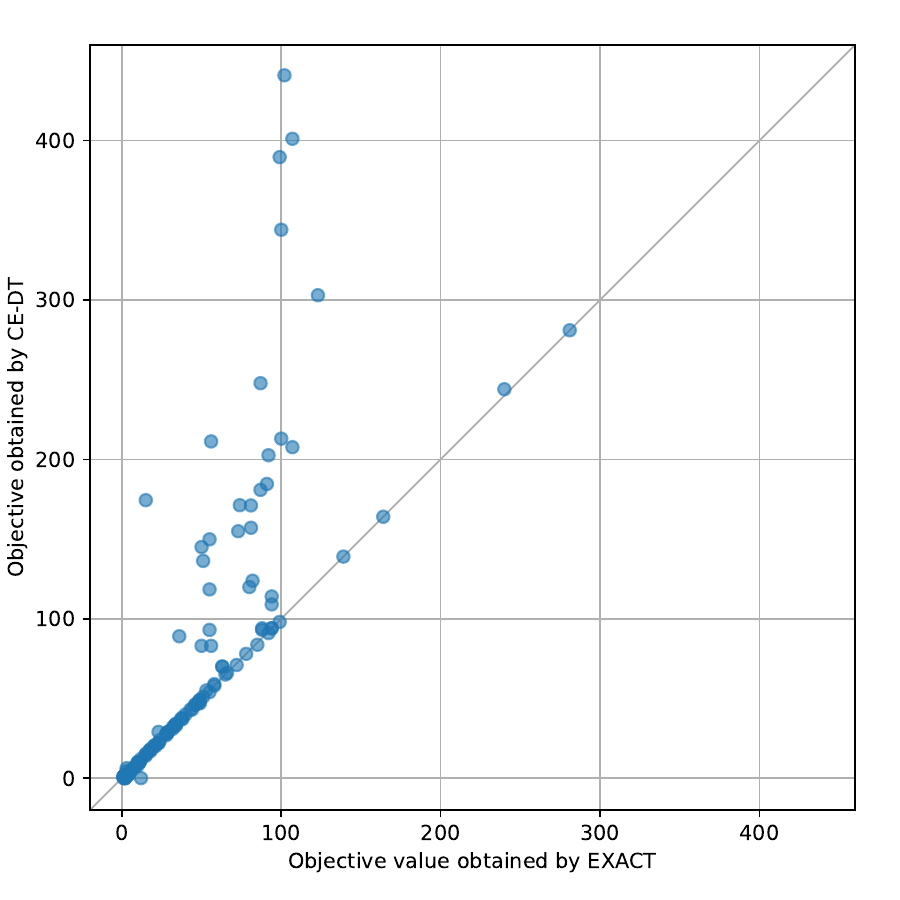}
         \caption{Objective value.}
         \label{fig:engelhardt_vs_samx:obj}
    \end{subfigure}
    \begin{subfigure}[t]{0.49\textwidth}
         \centering
         \includegraphics[width=1\linewidth]{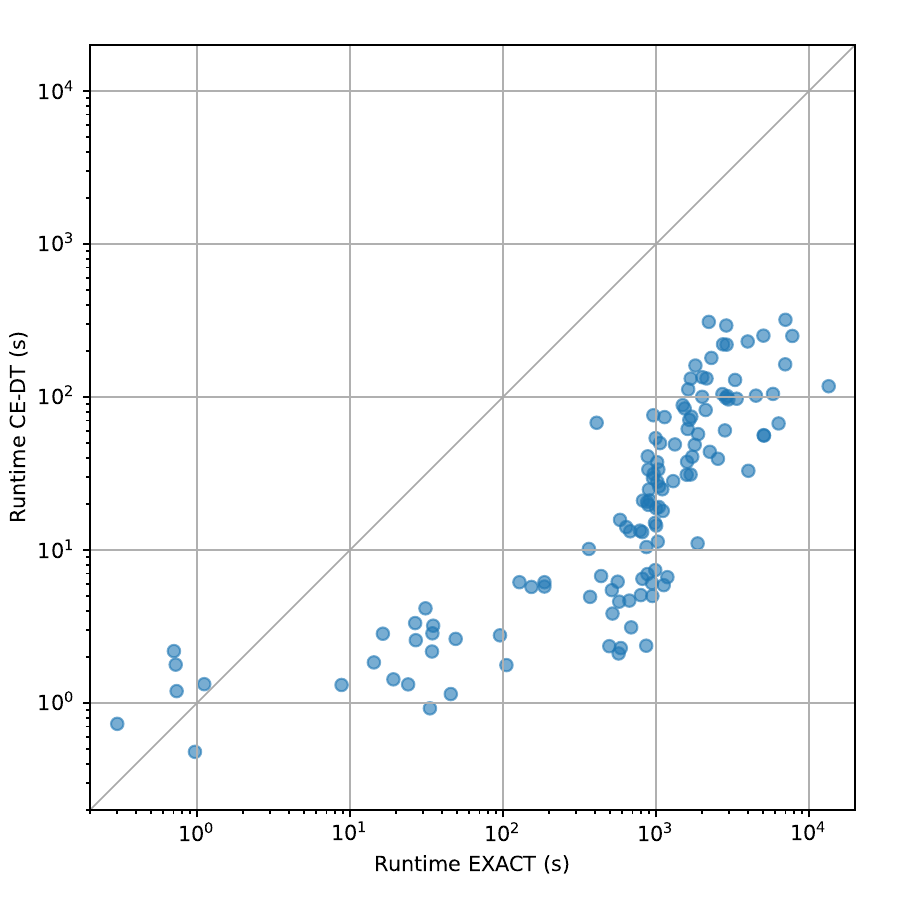}
         \caption{Runtime.}
         \label{fig:engelhardt_vs_samx:time}
    \end{subfigure}
    \caption{\textcolor{black}{Instance-wise comparison of objective value and runtime of the methods \exactsp and \samx.} Represented: 125/159 instances. Not represented: for 13/159 instances no CE was found by the heuristics but by \exact; for 2/159 instances a CE was found by the heuristics but not by \exact; for 19/159 instances the existence of a CE remains unknown. \label{fig:engelhardt_vs_samx}}
\end{figure}

\textcolor{black}{Results for \samxsp and \exactsp are compared in more detail in Figures~\ref{fig:engelhardt_vs_samx:obj} and~\ref{fig:engelhardt_vs_samx:time}. Both figures represent an instance-wise comparison, where every instance for which both methods found a CE is shown as a data point. Figure~\ref{fig:engelhardt_vs_samx:obj} displays the objective values of the best CE found. It can be seen that the heuristic often found solutions of similar quality, especially for instances where the exact approach found a CE close to the original instance. 
In contrast, for instances where the use of \exactsp results in a larger objective value, the relative gap between both methods also becomes larger. It is worth noting that our heuristic found some better solutions than the exact method, which can be explained by two causes: not all instances could be solved to optimality by the exact method and the use of the fractional version of the mutable parameter space allows for potentially better solutions.
Figure~\ref{fig:engelhardt_vs_samx:time} highlights the runtime of both methods. For nearly all instances, the heuristic exhibits significantly faster runtimes. However, for five instances the exact approach was faster than the heuristic. These are cases where the exact algorithm terminated quickly. As a result, the average absolute runtimes for these five instances are only 0.72 seconds and 1.45 seconds, respectively.} 

\subsubsection{Impact of Tree Depth, Pre- and Post-Processing}\label{sub:knapsack:ablation}
As part of this section, the impact of the pre- and post-processing methods presented in Section~\ref{sub:algs:pp} and the maximum depth of the decision tree is analyzed. For this purpose an ablation study, where all combinations of pre- and post-processing methods are tested, was conducted. Furthermore, a range of computational tests with different tree depths was executed. For both studies the method \samxsp was used as previous results revealed the most favorable trade-off between computational effort and solution quality.

\begin{figure}[h!]
    \centering
    \includegraphics[width=0.7\linewidth]{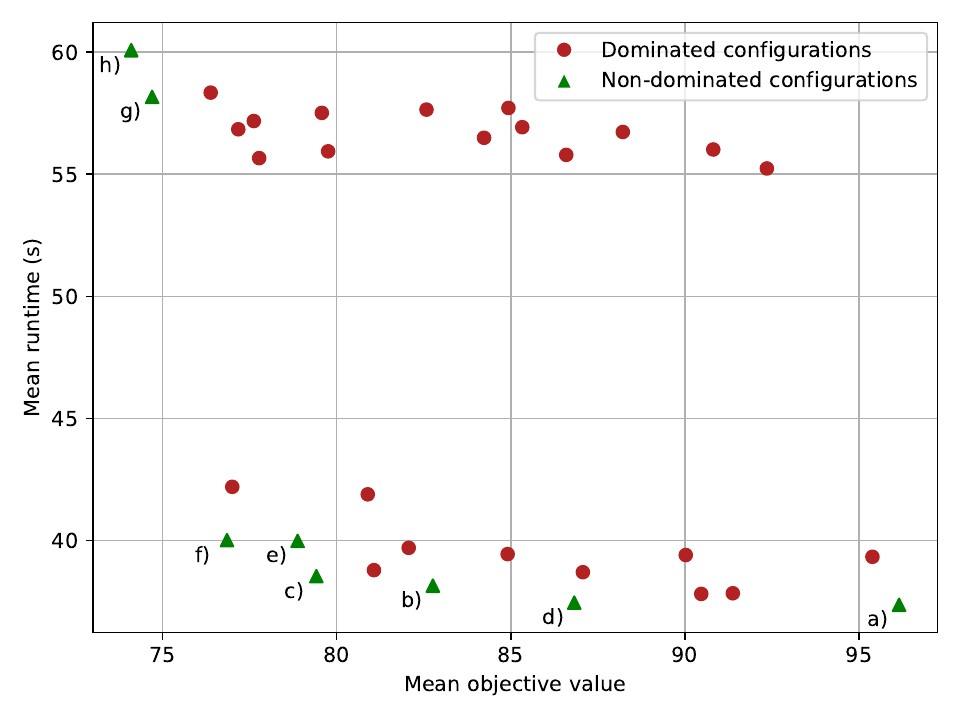}
    \caption{Mean runtime and mean objective value for all parameter combinations tested. Annotations at non-dominated configurations refer to entries of Table~\ref{tab:tab_ablation_small}. Represented: for 127/159 instances a CE was found.}
    \label{fig:ks_ablation_mean}
\end{figure}

\begin{table}[h!]
    \centering
    \begin{tabular}{lllllllll}
    & \multicolumn{8}{c}{Reference points in Figure~\ref{fig:ks_ablation_mean}} \\
    \hhline{~--------}
Method & a) & b) & c) & d) & e) & f) & g) & h) \\
\hline 
\REpre & 0  & 0  & 0  & 0  & 0  & 0  & 1  & 1  \\
\REpost & 0  & 0  & 0  & 0  & 0  & 1  & 1  & 1  \\
\LSone& 0  & 0  & 1  & 1  & 1  & 1  & 1  & 1  \\
\LStwo& 0  & 0  & 0  & 1  & 1  & 0  & 1  & 1  \\
\LSthree & 0  & 1  & 1  & 0  & 1  & 1  & 0  & 1  \\
\hline
Objective& 96.15 & 82.76 & 79.42 & 86.82 & 78.88 & 76.85 & 74.70 & 74.10 \\
Runtime (s)& 37.36 & 38.15 & 38.54 & 37.45 & 39.98 & 40.01 & 58.17 & 60.08 \\
Oracle calls& 1892.68 & 1905.91 & 1931.03 & 1931.73 & 2022.91 & 2056.50 & 2825.55 & 2990.75
    \end{tabular}
    \caption{Non-dominated configurations shown in Fig.~\ref{fig:ks_ablation_mean}. A value of one indicates the use of the respective method. Objective value, time and oracle calls are averaged. Represented: for 127/159 instances a CE was found.}
    \label{tab:tab_ablation_small}
\end{table}

Figure~\ref{fig:ks_ablation_mean} shows the results of the ablation study. We evaluated all possible combinations of the use or non-use of the two resampling and the three line search methods, resulting in the 32 data points shown. Objective value and runtime in seconds are shown as mean over the 127 instances, for which valid CEs were found. We distinguish between dominated (red circles) and non-dominated (green triangles) configurations, the latter lying on the Pareto-front. These configurations and the respective performance indicators are shown in detail in Table~\ref{tab:tab_ablation_small}. Extended results for all configurations are presented in Appendix~\ref{app:ablation}. Two distinct clusters are revealed: The top cluster exclusively contains configurations making use of resampling (\REpre). In contrast, in all runs represented in the bottom cluster this method is not used. This can be explained by the fact that this step not only includes the sampling and evaluation of more data points but also increases the runtime in all subsequent steps, as the set $\mathcal{S}$ is enlarged. Utilizing all pre- and post-processing methods together indeed results in the best CEs (data point h)). However, this setting also results, as expected, in the highest overall runtime. It turns out that between this configuration and the one that involves no additional steps at all (a)), there are various other options that may be favorable, depending on preferences regarding the quality and runtime-trade-off.

\begin{table}[H]
    \centering
    \begin{tabular}{lllll}
        \multirow{2}{1.7cm}{Maximum depth ($d$)} & \multirow{2}{1.2cm}{Actual depth} & \multirow{2}{2.1cm}{Objective value} & \multirow{2}{2.1cm}{Runtime (s)} & \multirow{2}{2.1cm}{Oracle calls}\\ \\
        \hline
        2 & 1.93 & 79.59 & 39.78 & 5657.35 \\
        4 & 3.29 & 76.36 (-4.06\%) & 49.40 (+24.18\%)& 5847.84 (+3.37\%)\\ 
        6 & 4.34 & 74.10 (-6.90\%) & 55.19 (+55.19\%)& 5981.50 (+5.73\%)\\
        8 & 5.09 & 74.16 (-6.82\%) & 60.06 (+60.06\%)& 6243.09 (+10.35\%)\\
        10 & 5.71 & 73.82 (-7.25\%) & 64.03 (+64.03\%)& 6493.07 (+14.77\%)
    \end{tabular}
    \caption{Comparison of the use of different values for the maximum tree depth ($d$). Average values are reported. Represented: for 127/159 instances a CE was found.}
    \label{tab:ks_tree_depth}
\end{table}

Table~\ref{tab:ks_tree_depth} illustrades another trade-off between those two metrics which is implied by the maximum depth of the decision tree trained. Not surprisingly, increasing this parameter leads to constantly higher runtimes. It appears, however, that this increase is only partly attributable to the increasing number of oracle calls. Thus, computing deeper trees takes significantly more time.
In contrast, the improvement in quality of the obtained CEs stagnates for maximum tree depths of six or larger. It can also be seen that for larger values, the actual tree depth achieved does not reach those boundaries.

\subsection{CFLP}\label{sub:cflp}
In this section, we present computational results for the CFLP instances. Based on the results found in the previous experiments, we make use of the methods \samx, \cartpure, and \texttt{only sampling}. For \samx, we furthermore compare the matrix- and the row-based approach. We make use of all pre- and post-processing steps and limit the depth of the decision trees to six.

\begin{figure}[h!]
    \centering
    \includegraphics[width=0.99\linewidth]{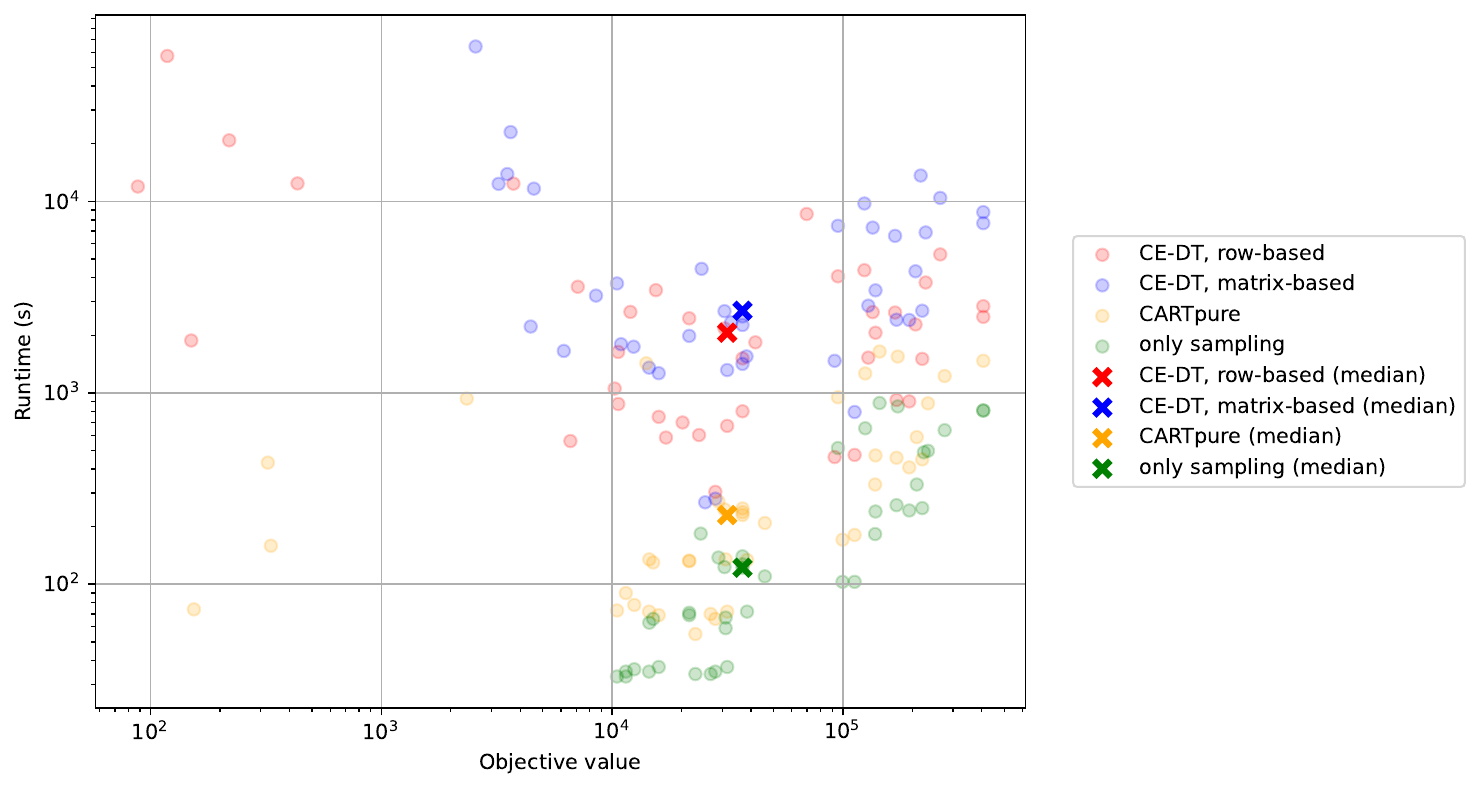}
    \caption{Results for instances with favored solution space of type $\mathcal D^+$. 9/50 instances for which no CE was found are not represented.}
    \label{fig:cflp_d_pos}
\end{figure}

Figures~\ref{fig:cflp_d_pos} and \ref{fig:cflp_d_neg} present runtimes and objective values for the instances for which feasible solutions were found. We distinguish between the two types of favorable solution spaces by using two figures as the results for both differ noticeably. When evaluating the results, it is important to note that there is currently no way to determine actual optimal solutions and their corresponding objective values which could serve as a benchmark. In Figure~\ref{fig:cflp_d_pos} results for using the favored solution space of type $\mathcal D^+$ are shown. It can be seen that all methods achieve CEs of similar quality, with method \cartpuresp slightly outperforming the other methods. The row-based version of \samxsp results in slightly better solutions than the matrix-based version, which are also obtained noticeably faster. Nevertheless, both methods are computationally much more expensive than only using sampling or the \cartpuresp approach.

\begin{figure}[h!]
    \centering
    \includegraphics[width=0.99\linewidth]{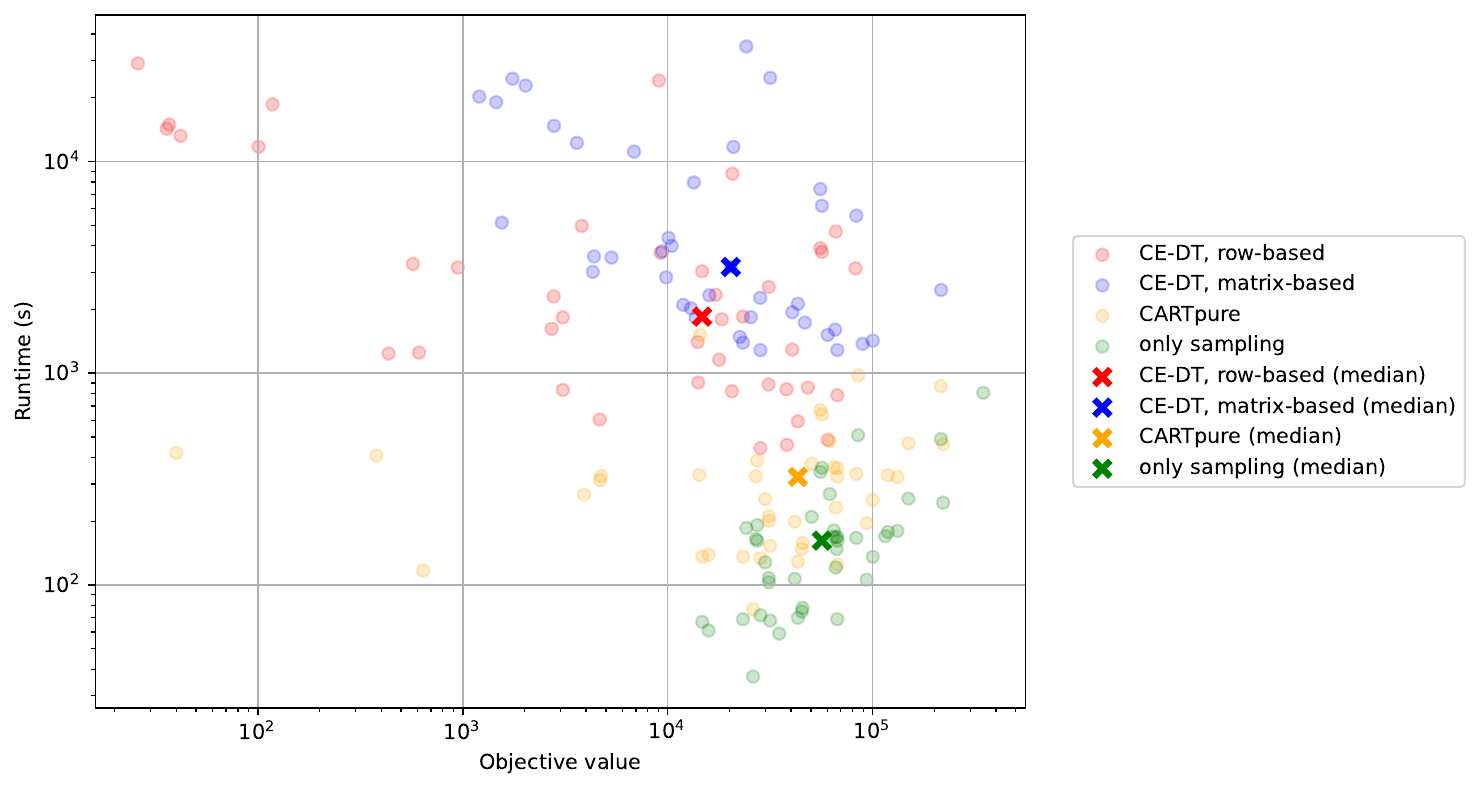}
    \caption{Results for instances with favored solution space of type $\mathcal D^-$. 9/50 instances for which no CE was found are not represented.}
    \label{fig:cflp_d_neg}
\end{figure}

In contrast, for the instances with a favored solution space of type $\mathcal D^-$, presented in Figure~\ref{fig:cflp_d_neg}, both versions of method \samxsp result in solutions which are significantly closer to the original instances than the ones obtained by plain sampling or \cartpure. In general, it can be noted that the improvement in terms of solution quality is much larger for instances considering this type of favorable solution space. In accordance with the previous results, the row-based version of \samxsp clearly outperforms the matrix-based version as it is faster and results in better solutions. 

These results show that \samxsp is also applicable for large instances in which the parameters of multiple constraints are mutable and a subset of the decision variables are continuous. However, based on the structure of the underlying instances, there also appear to be constellations in which the application of the CART-based algorithm seems preferable. 

\section{Conclusion \& Outlook}

Recently, the topic of explainability in mathematical optimization has received increasing attention. This is fueled by the desire to increase trust in decisions proposed by black-box algorithms. Nevertheless, few methods currently exist to address this need. One promising concept is the use of CEs for optimization problems. Although heuristics have been developed for the case of LPs, determining CEs for larger IP instances is still practically impossible.

In this paper, we present a novel approach to heuristically compute CEs for integer optimization problems. We prove that under classical assumptions the feasible region of (\ref{eq:weak_CE}) is essentially a union of polyhedra and that optimal CEs lie on the boundary of these polyhedra which are parametrized by feasible solutions of the underlying present problem. This allows a novel type of decision tree to approximate this region, enabling us to subsequently generate a CE for this tree at relatively low computational costs. We introduce further pre- and post-processing steps which improve the quality of the calculated CEs.
Using Knapsack instances, we benchmark against an exact method for problems with a single mutable constraint and show that we can calculate CEs of similar quality in a fraction of the time. Our ablation study shows that all the component introduced in the pre- and post-processing have an actual effect on the quality of the calculated CE. Of the two version of the proposed \samxsp algorithm, the version querying a single constraint instead of the full constraint matrix was consistently found to perform best.
Furthermore, the results empirically prove the applicability of our method to MIPs, surpassing the scope of our theoretical findings which focus on IPs.

Future work may include the development of methods to enhance the generation of the training set $\mathcal S$. The approach of dynamical sampling apparently has a rate of finding a valid CE in 90\% of the cases where the feasibility of (\ref{eq:weak_CE}) is proven. Even though the development of a computationally efficient method guaranteeing to find a CE seems unlikely due to the proven complexity of (\ref{eq:weak_CE}), we believe that further improvements are possible by, for example, the incorporation of domain knowledge. Furthermore,  query candidates in the \samxsp algorithm are currently found as a byproduct of sampling. Alternative approaches to generate candidates may further increase the performance of this method. Finally, our method is not designed to verify if the weak CE problem is infeasible. Fast methods to check infeasibility are necessary.

\bibliographystyle{alpha}
\bibliography{lit}

\newpage
\appendix

\section{Row-Based Formulation of the \samxsp Algorithm}\label{app:rowbased}
Algorithm~\ref{alg:sample_x_row} describes a variant of the \samxsp method in which a specific constraint of the underlying problem is checked for feasibility. Thus, a query node checks whether the inequality $A_ix\geq b$ is satisfied. In contrast, in the version described in Section~\ref{sub:algs:samx}, we determine queries which check if for an $(A,b)$ sample the system of linear inequalities $Ax\geq b$ holds. 

\begin{algorithm}
\begin{algorithmic}[1]
\Ensure samples $\mathcal S$, tree depth $d$, classification threshold $\beta\in (0,1)$ 
\State $\textcolor{black}{X^{cand} \gets \emptyset,} Q \gets 2^{d-1}-1, \mathcal S^1 \gets\mathcal S , \mathcal L \gets \emptyset$
\ForAll{$(A,b) \in \mathcal S$} \Comment{Generate x candidates}
\State $x^1 \gets \argmin \{ \hat c^\top x: A^i x \ge b^i, x\in\mathcal X\}$ \label{eq:samx_row:oc1}
\State $x^2 \gets \argmin \{ \hat c^\top x: A^i x \ge b^i, x\in\mathcal X\cap \mathcal D\}$ \label{eq:samx_row:oc2}
\State $X^{cand} \gets X^{cand} \cup \{x^1,x^2\}$
\EndFor
\ForAll{$q \in [Q]$} \Comment{Build tree iteratively} \label{eq:samx_row:iter_q}
    \If{$\mathcal S^{q} \neq \emptyset$}
    \State $\mathcal S^{\text{left}} \gets \emptyset, \mathcal S^{\text{right}} \gets \emptyset, g^*\gets0.5, i^* \gets \text{None}$ 
    \State $\bar x^q \gets \text{None}, \bar i^q \gets \text{None},\mathcal S^{\text{leftchild(q)}}\gets \emptyset,\mathcal S^{\text{rightchild(q)}}\gets \emptyset$
        \ForAll{$x \in X^{cand}$} \Comment{Determine best query}\label{eq:samx_row:iter_x}
            \ForAll{$i \in [m]$}\label{eq:samx_row:iter_i}
            \ForAll{$(A,b) \in \mathcal S^{q}$}
            \If{$A^ix \geq b^i$} \label{eq:samx_row:check_i}
                \State $ \mathcal S^{\text{left}} \gets \mathcal S^{\text{left}}\cup (A,b)$
            \Else 
                \State $ \mathcal S^{\text{right}} \gets \mathcal S^{\text{right}} \cup (A,b)$
            \EndIf
            \EndFor
        \If{$\text{ginisplit}(\mathcal S^{\text{left}},\mathcal S^{\text{right}}) < \min\{\text{gini}\text(\mathcal S^{q}),g^*\}$} \label{eq:samx_row:newbestsplit}
            \State $\bar x^q \gets x,\mathcal S^{\text{leftchild(q)}}\gets \mathcal S^{\text{left}}, \mathcal S^{\text{rightchild(q)}}\gets \mathcal S^{\text{right}}$ 
            \State $g^* \gets \text{ginisplit}(\mathcal S^{\text{left}}, \mathcal S^{\text{right}})$
            \State $i^{*} \gets i$
        \EndIf
        \EndFor
        \EndFor
    \If{$S^{\text{leftchild}(q)} = \emptyset$ \textbf} \Comment{Identify leaf nodes}
        \State $\mathcal L \gets \mathcal L \cup\{q\}$
    \ElsIf{$\text{leftchild}(q) \in \{2^{d-1}+1, ...,2^d-1\}$}
    \State $\mathcal L \gets \mathcal L \cup\{\text{leftchild}(q), \text{rightchild}(q)\}$
    \EndIf
    \EndIf
\EndFor
\State label leaves $\ell \in \mathcal L$ as ``feasible'' if $\frac{|\{(A,b)\in \mathcal S^\ell:(A,b)\in \mathcal W\}|}{|\mathcal S^{\ell}|} \geq \beta$ else ``infeasible'' \label{eq:samx_row:labeltree}
\State \Return classification tree on $\mathcal H$ with splits $(\bar x^q, \bar i^q)\ \forall q \in [Q]$ \label{eq:samx_row:return}
\end{algorithmic}
\caption{Greedy algorithm for training the decision tree (matrix based).} 
\label{alg:sample_x_row}
\end{algorithm}

The main structure of Algorithm~\ref{alg:sample_x_row} is identical to Algorithm~\ref{alg:sample_x}, therefore the following description only focuses on the differences between both methods. We now iterate not only over all candidate solutions (Line~\ref{eq:samx_row:iter_x}), but also over all constraints $i \in [m]$ of the underlying problem (Line~\ref{eq:samx_row:iter_i}). In Line~\ref{eq:samx_row:check_i}, we now verify a single constraint with index $i$, instead of a system of inequalities. The queries of the decision tree that is returned (Line~\ref{eq:samx_row:return}) contain not only solution vectors but also the index of the row to be verified.

To compute counterfactuals for all leaves of the decision tree which are labeled as \textit{feasible} we solve the following problem.
\begin{subequations}
\begin{align}
    \min \ &   \| (\hat A, \hat b) - (A, b) \| \\
    s.t. \ & a_i^\top y_F \geq b_i & \forall (y_F,i) \in \mathcal Y_F^\prime & \label{eq:find_ce_for_tree_row:feas} \\
           & a_i^\top y_I \leq b_i - \epsilon & \forall (y_I, i) \in \mathcal Y_I^{\prime} & \label{eq:find_ce_for_tree_row:infeas}\\
           & (A, b) \in \mathcal H
\end{align}\label{eq:find_ce_for_tree_row}
\end{subequations}
We make use of the sets $\mathcal Y_F^{\prime}$ and $\mathcal Y_I^{\prime}$ which contain the solution vectors as well as the corresponding indices of the constraints which must be satisfied (Constraint~\ref{eq:find_ce_for_tree_row:feas}) or violated (Constraint~\ref{eq:find_ce_for_tree_row:infeas}) in order to reach a specific leaf.

\section{MIQP Formulation for Locally Optimal Splits}\label{sub:algs:miqp}
In this section, we present a non-linear mixed-integer programming formulation to determine queries for a decision tree which are locally optimal. In the case, where we require the split to partition some set $\mathcal S = \{(A^{t},b^{t})\}^N_{t=1}$ by checking if any constraint is violated by a vector $x$, the optimal vector $x$ can be obtained by solving Problem~(\ref{eq:gini1}).

As part of this formulation, binary variables $u_t$ indicate whether a data point $t\in [N]$, is feasible for the solution vector $x\in \mathbb Z^n$ by being set to one, or infeasible by being set to zero. This allows us to formulate the minimization of the gini impurity implied by a vector $x$ as the objective function. For the sake of clarity, the objective function (\ref{eq:gini1:obj}) is explicitly formulated in line with the definition in Section~\ref{sec:algs}. Note, however, that it can easily be reformulated to a quadratic form by introducing additional auxiliary variables and quadratic constraints, allowing it to be solved using state-of-the-art solver software. Constraints~(\ref{eq:gini1:l3}) and (\ref{eq:gini1:l4}) enforce binary variables $z_{it}$ to be one, if  constraint $i$ of data point $t$ is violated, otherwise to be zero. Constraints~(\ref{eq:gini1:l5}) only allow $u_t$ to be set to one (which corresponds to $t$ being assigned to the left child node) if all constraints of $t$ are satisfied. Additionally, Constraints~(\ref{eq:gini1:l6}) only allow $u_t$ being set to zero if at least one constraint of $t$ is violated, assigning $t$ to the right child node. 

\begin{subequations}
\begin{align}
    \min \ & \def\arraystretch{2.2}\begin{array}{l} 
        2 \Big( \frac{\sum_{t\in [N]} u_t}{N} \frac{\sum_{t\in [N]} y_t u_t}{\sum_{t\in [N]}u_t} \left(1-\frac{\sum_{t\in [N]} y_t u_t}{\sum_{t\in [N]}u_t}\right) \\ 
        \quad + \frac{\sum_{t\in [N]} (1-u_t)}{N} \frac{\sum_{t\in [N]} y_t (1-u_t)}{\sum_{t\in [N]}(1-u_t)} \left(1-\frac{\sum_{t\in [N]} y_t (1-u_t)}{\sum_{t\in [N]}(1-u_t)}\right) \Big) 
    \end{array} \label{eq:gini1:obj}\\
               s.t. \ & \sum_{j \in [n]} A^t_{ij} x_j \geq b_i^t - M z_{it} & \forall i \in [m], t \in [N] & \label{eq:gini1:l3}\\
               & \sum_{j \in [n]} A^t_{ij} x_j \leq b_i^t + M (1-z_{it}) - \epsilon & \forall i \in [m], t \in [N] & \label{eq:gini1:l4}\\
               & u_t \geq \sum_{i\in [m]} (1-z_{it}) - m +1 & \forall t \in [N] \label{eq:gini1:l5}\\
               & u_t \leq \frac{1}{m} \sum_{i\in [m]} (1-z_{it}) & \forall t \in [N] \label{eq:gini1:l6}\\
               & u_t \in \{ 0,1\} & \forall t \in [N] \label{eq:gini1:dom2}\\
               & x_j \in \Z & \forall j \in [n] \label{eq:gini1:dom3}\\
               & z_{it} \in \{0,1\} & \forall i \in [m], t \in [N] \label{eq:gini1:dom4}
\end{align}\label{eq:gini1}
\end{subequations}

A modification of Problem~(\ref{eq:gini1}) for the row-based case is given by Formulation~(\ref{eq:gini_row}). Constraints~(\ref{eq:gini1:l5}) and (\ref{eq:gini1:l6}) are replaced by (\ref{eq:gini_row:l5}) and (\ref{eq:gini_row:l6}), enforcing that a single constraint $i \in [m]$ of the underlying optimization problem is used for verifying feasibility. This requires the introduction of the binary auxiliary variables $r_i$ (Constraint~(\ref{eq:gini_row:new_var})).

\begin{subequations}
\begin{align}
    \min \ & \def\arraystretch{2.2}\begin{array}{l} 
        2 \Big( \frac{\sum_{t\in [N]} u_t}{N} \frac{\sum_{t\in [N]} y_t u_t}{\sum_{t\in [N]}u_t} \left(1-\frac{\sum_{t\in [N]} y_t u_t}{\sum_{t\in [N]}u_t}\right) \\ 
        \quad + \frac{\sum_{t\in [N]} (1-u_t)}{N} \frac{\sum_{t\in [N]} y_t (1-u_t)}{\sum_{t\in [N]}(1-u_t)} \left(1-\frac{\sum_{t\in [N]} y_t (1-u_t)}{\sum_{t\in [N]}(1-u_t)}\right) \Big) 
    \end{array} \label{eq:gini_row:obj}\\
        s.t. \ & \text{(\ref{eq:gini1:l3}) to (\ref{eq:gini1:l4})} &\tag{10b) to (10c} \\
               & \sum_{i \in [m]} r_i z_{it} =u_t & \forall t \in [N] \tag{10d} \label{eq:gini_row:l5}\\
               & \sum_{i \in [m]} r_i = 1 & \tag{10e} \label{eq:gini_row:l6}\\
               & \text{(\ref{eq:gini1:dom2}) to (\ref{eq:gini1:dom4})} \tag{10f) to (10h} \\
               & r_i \in \{ 0,1\} & \forall i \in [m]  \tag{10l}\label{eq:gini_row:new_var}
\end{align}\label{eq:gini_row}
\end{subequations}

\newpage

\section{Parameters used in the Computational Experiments}\label{app:params}

\paragraph{Parameters of the probability distributions.}
Two different types of probability distributions were used for the experiments presented in this work. Both are parameterized by the scaling parameter $\delta\in \Delta$, where $|\Delta| \in \mathbb Z_+$ represents the iteration limit of the dynamic sampling approach. 

In Sections~\ref{sub:knapsack} and \ref{sub:cflp}, uniform distributions were used. We define $\Delta = \{ 1,\ldots ,K\}$\textcolor{black}{. For a given value of $\delta$ and each mutable parameter $p$ a value is sampled from the interval $[\hat p - (\hat p - p^{lb}) \cdot \frac{\delta}{|\Delta|}, \hat p + (p^{ub}-\hat p)\cdot \frac{\delta}{|\Delta|}]$ with $p^{lb}$ being the lower and $p^{ub}$ the upper bound derived from the mutable parameter space $\mathcal H$. That is, setting $\delta$ to zero would only allow to sample the single point $(\hat A, \hat b)$, whereas setting $\delta :=|\Delta|$ would result in sampling from the full mutable parameter space. This allows us to iteratively enlarge the subset of $\mathcal H$ which is explored in the dynamic sampling process. } 

Furthermore, for a subset of experiments presented in Section~\ref{sub:knapsack:sampling}, truncated normal distributions were used. As part of the dynamic sampling procedure, they are centered at $(\hat A, \hat b)$ and their standard deviations are given by $\sigma = 0.003125 \cdot 2^{\delta-1}$ for $\delta \in \Delta = \{ 1,\ldots ,K\}$. The boundaries of the mutable parameter space $\mathcal H$ are used as cut-off values. In the associated experiments, methods \REpresp and \REpostsp were not used.

\paragraph{Other parameters.}
In Table~\ref{tab:params_appendix}, we list further parameters used in the computational experiments that are not explicitly stated in the respective sections. The entry ``\textit{var}'' indicates that the parameter was changed as part of the experiment and its value is therefore reported in the setup of the experiment. The entry ``-'' highlights that the specific algorithm was not used and hence, no value was set for this parameter.

\begin{table}[h!]
    \centering
    \begin{tabular}{lllll}
        & \multicolumn{4}{c}{Experiments presented in Sec.}\\
        \hhline{~----}
        Parameter & \ref{sub:knapsack:sampling} & \ref{sub:knapsack:algs} & \ref{sub:knapsack:ablation} & \ref{sub:cflp} \\
        \hline
        Algorithm~\ref{alg:dyn_sampling}: $N$ & \textit{var} & 50$I$ & 50$I$ & 100 \\
        Algorithm~\ref{alg:dyn_sampling}: $|\Delta|\textcolor{black}{, K}$ & 5 & 5 & 5 & 5 \\
        Algorithms~\ref{alg:sample_x}, \ref{alg:cart}: $d$ & - & 6 &\textit{var} & 6 \\
        Algorithms~\ref{alg:sample_x}, \ref{alg:cart}: $\beta$ & - & 0.001 & 0.001 & 0.001 \\
        \LStwo: $\alpha$ & - & 0.01& 0.01& 0.01\\
        \REpre: $N_{pre}$ & - & 5 & 5 & 5 \\
        \REpost: $N_{post}$ & - & 5 & 5 & 5 \\
        Problem~\ref{eq:find_ce_for_tree}: $\epsilon$ & - & - & - & 0.00001 \\
        Problems~\ref{eq:find_ce_for_tree}, \ref{eq:gini1}, \ref{eq:gini_row}: $M$ & - & - &- & $2\sum_{i \in [m]}\hat b_i$ \\
        Problems~\ref{eq:gini1}, \ref{eq:gini_row}: $\epsilon$ & - & - & - & 0.001 \\
    \end{tabular}
    \caption{Hyper-parameters used in the computational experiments. ``\textit{var}'' indicates that the respective parameter was subject to changes and the specific values are reported in the respective section. ``-'' indicates that the parameter was not used.}
    \label{tab:params_appendix}
\end{table}

\begin{landscape}
\section{Extended Results of the Ablation Study}\label{app:ablation}
\begin{table}[h!]
    \centering
    \scalebox{.75}{
    \begin{tabular}{lllllllll}
    \multirow{2}{1.9cm}{Reference in Fig.~\ref{fig:ks_ablation_mean}} & \multicolumn{5}{c}{Method} &\multirow{2}{1.6cm}{Objective value}\\
    \hhline{~-----}
        & \REpre & \REpost & \LSone & \LStwo & \LSthree & & Runtime (s) & Oracle calls \\
        \hline 
a) & 0 & 0 & 0 & 0 & 0 & 96.15 & 37.36 & 1892.68 \\
b) & 0 & 0 & 0 & 0 & 1 & 82.76 & 38.15 & 1905.91 \\
-  & 0 & 0 & 0 & 1 & 0 & 90.47 & 37.81 & 1907.09 \\
-  & 0 & 0 & 0 & 1 & 1 & 82.07 & 39.70 & 2011.42 \\
-  & 0 & 0 & 1 & 0 & 0 & 91.37 & 37.84 & 1920.16 \\
c) & 0 & 0 & 1 & 0 & 1 & 79.42 & 38.54 & 1931.03 \\
d) & 0 & 0 & 1 & 1 & 0 & 86.82 & 37.45 & 1931.73 \\
e) & 0 & 0 & 1 & 1 & 1 & 78.88 & 39.98 & 2022.91 \\
-  & 0 & 1 & 0 & 0 & 0 & 95.38 & 39.33 & 1921.93 \\
-  & 0 & 1 & 0 & 0 & 1 & 81.07 & 38.79 & 1957.13 \\
-  & 0 & 1 & 0 & 1 & 0 & 90.02 & 39.40 & 1949.72 \\
-  & 0 & 1 & 0 & 1 & 1 & 80.89 & 41.89 & 2148.07 \\
-  & 0 & 1 & 1 & 0 & 0 & 87.07 & 38.70 & 2026.50 \\
f) & 0 & 1 & 1 & 0 & 1 & 76.85 & 40.01 & 2056.50 \\
-  & 0 & 1 & 1 & 1 & 0 & 84.91 & 39.44 & 2049.80 \\
-  & 0 & 1 & 1 & 1 & 1 & 77.00 & 42.20 & 2225.63 \\
-  & 1 & 0 & 0 & 0 & 0 & 92.35 & 55.24 & 2602.31 \\
-  & 1 & 0 & 0 & 0 & 1 & 79.76 & 55.94 & 2614.45 \\
-  & 1 & 0 & 0 & 1 & 0 & 86.59 & 55.79 & 2616.65 \\
-  & 1 & 0 & 0 & 1 & 1 & 79.57 & 57.51 & 2715.71 \\
-  & 1 & 0 & 1 & 0 & 0 & 88.22 & 56.73 & 2643.65 \\
-  & 1 & 0 & 1 & 0 & 1 & 77.78 & 55.66 & 2654.92 \\
-  & 1 & 0 & 1 & 1 & 0 & 84.23 & 56.49 & 2656.09 \\
-  & 1 & 0 & 1 & 1 & 1 & 77.62 & 57.18 & 2745.79 \\
-  & 1 & 1 & 0 & 0 & 0 & 90.81 & 56.01 & 2643.73 \\
-  & 1 & 1 & 0 & 0 & 1 & 77.18 & 56.84 & 2673.35 \\
-  & 1 & 1 & 0 & 1 & 0 & 85.33 & 56.93 & 2669.48 \\
-  & 1 & 1 & 0 & 1 & 1 & 76.38 & 58.34 & 2849.80 \\
-  & 1 & 1 & 1 & 0 & 0 & 84.93 & 57.72 & 2798.54 \\
g) & 1 & 1 & 1 & 0 & 1 & 74.70 & 58.17 & 2825.55 \\
-  & 1 & 1 & 1 & 1 & 0 & 82.58 & 57.65 & 2821.92 \\
h) & 1 & 1 & 1 & 1 & 1 & 74.10 & 60.08 & 2990.75
    \end{tabular}}
    \caption{Results of the ablation study containing all parameter combinations tested. A value of one indicates the use of
the respective method. Objective value, runtime and oracle calls are averaged.}
    \label{tab:tab_ablation_full}
\end{table}
\end{landscape}

\end{document}